\newtheorem{thm}{Theorem}[section]
\newtheorem{lem}[thm]{Lemma}
\newtheorem{prop}[thm]{Proposition}
\newtheorem{coro}[thm]{Corollary}
\newtheorem{Def}[thm]{Definition}
\newtheorem{rem}[thm]{Remark}
\def\ds{\displaystyle}
\def\R{\mathbb{R}}
\def\N{\mathbb{N}}
\def\X{\mathbb{X}}
\def\be{\begin{equation}}
\def\ee{\end{equation}}
\def\g{{\bf g}}
\def\n{{\boldsymbol n}}
\def\p{\partial}
\def\grad{\boldsymbol{\nabla}}
\def\div{\grad\cdot}
\def\O{\Omega}
\def\G{\Gamma}
\def\sig{\sigma}
\def\eps{\epsilon}
\def\x{{\boldsymbol x}}
\def\d{{\rm d}}
\def\ov#1{\overline{#1}}
\def\wh#1{\widehat{#1}}
\def\wt#1{\widetilde{#1}}
\def\bvarphi{\boldsymbol{\varphi}}
\def\btau{\boldsymbol{\tau}}
\def\sign{{\text{sign}}}
\def\bu{{\boldsymbol  u}}
\def\bs{{\boldsymbol  s}}
\def\bv{{\boldsymbol v}}
\def\bz{{\boldsymbol z}}
\def\bw{{\boldsymbol w}}
\def\0{{\bf 0}}
\def\1{{\bf 1}}
\def\bbA{{\mathbb A}}
\def\bbD{{\mathbb D}}
\def\bbJ{{\mathbb J}}
\def\bbP{{\mathbb P}}
\def\bbX{{\mathbb X}}
\def\Cc{\mathcal{C}}
\def\Dd{\mathcal{D}}
\def\Ee{\mathcal{E}}
\def\Ff{\mathcal{F}}
\def\Ii{\mathcal{I}}
\def\Ll{\mathcal{L}}
\def\Mm{\mathcal{M}}
\def\Pp{\mathcal{P}}
\def\Rr{\mathcal{R}}
\def\Tt{\mathcal{T}}
\def\Vv{\mathcal{V}}
\def\Eee{\mathfrak{E}}
\def\size{{\rm size}}
\def\reg{{\rm reg}}
\def\dt{{\Delta t}}
\begin{document}

\title[Parametrization for the Richards equation]{Improving Newton's method 
performance by parametrization: the case of Richards equation}
\thanks{This work was supported by the GeoPor project funded by 
the French National Research Agency (ANR) with the grant ANR-13-JS01-0007-01 (project GEOPOR)
}
\author{Konstantin Brenner \and Cl\'ement Canc\`es}
\maketitle

\begin{abstract}
The nonlinear systems obtained by discretizing degenerate parabolic equations 
may be hard to solve, especially with Newton's method. In this paper, we apply 
to Richards equation a strategy that consists in defining a new primary unknown 
for the continuous equation in order to stabilize Newton's method by parametrizing 
the graph linking the pressure and the saturation. The resulting form of Richards 
equation is then discretized thanks to a monotone Finite Volume scheme. We 
prove the well-posedness of the numerical scheme. Then we show under 
appropriate non-degeneracy conditions on the parametrization that Newton’s 
method converges locally and quadratically. Finally, we provide numerical 
evidences of the efficiency of our approach.
\end{abstract}

\vspace{10pt}

{\small {\bf Keywords.}
Richards equation, Finite Volumes, Newton's method, parametrization
\vspace{5pt}

{\bf AMS subjects classification. }
65M22, 65M08, 76S05
}

\section{Introduction}

\subsection{Motivations and presentation of the Richards equation}

Solving numerically some nonlinear partial differential equations, for example by using 
finite elements or finite volumes,  
often amounts to the resolution of some nonlinear system of equations of the form:
\be\label{eq:syst}
\text{Find $\bu \in \R^N$ such that}\quad
\Rr(\bu) = \0_{\R^N},
\ee
where $N\in \N^\ast$ is the number of degrees of freedom and can be large. 
One of the most popular method for solving the systems of the form~\eqref{eq:syst} 
is the celebrated Newton-Raphson method. If this iterative procedure converges, 
then its limit is necessarily a solution to~\eqref{eq:syst}. However, making the Newton 
method converge is sometimes difficult and might require a great expertise. 
Nonlinear preconditioning technics have been recently developed in improve the 
performance of the Newton's method, see for instance~\cite{DGKKM_HAL, CK02}. 

Complex multiphase or unsaturated porous media flows are often modeled thanks to degenerate parabolic problems. 
We refer to~\cite{BB90} for an extensive discussion about models of porous media flows. 
For such degenerate problems, making Newton's method converge is often very difficult. 
This led to the development of several strategies to optimize the convergence properties, 
like for instance the so-called continuation-Newton method~\cite{YTA10}, or trust region based solvers~\cite{WT13}.
An alternative approach consist in solving~\eqref{eq:syst} thanks to a robust fixed point procedure with 
linear convergence speed rather that with the quadratic Newton's method (see for instance~\cite{ZBC90, JK91,JK95, PRK04}). 
Comparisons between the fixed point and the Newton's strategies are presented for instance in~\cite{LA98, BP99} (see also \cite{RPK06}).
Combinations of both technics (perform few fixed points iterations before running Newton's algorithm) 
was for instance performed in~\cite{LR16}.

Our strategy consists in reformulating the problem before applying Newton's method.
The reformulation consists in changing the primary variable in order to improve the behavior of Newton's method. 
We apply this strategy to the so called~\emph{Richards equation}~\cite{Ric31, BB90}
modeling the unsaturated flow of water within a porous medium. Extension to more complex 
models of porous media flows will be the purpose of the forthcoming contribution~\cite{BG16}.

Denote by $\O$ some open subset of $\R^d$ ($d\le 3$) representing the porous 
medium (in the sequel, $\O$ will be supposed to be polyhedral for meshing purpose), by $T>0$ a finite time horizon, 
and by $Q:=\O\times(0,T)$ the corresponding space-time cylinder.
We are interested in finding a saturation profile $\ov s:Q \to [0,1]$ 
and a water pressure $\ov p:Q \to \R$ such that 
\be\label{eq:Richards}
\p_t \ov s - \div\left(\lambda(\ov s)\left(\grad \ov p - \g\right)\right) = 0, 
\ee
where the mobility function $\lambda:[0,1] \to \R_+$ is a nondecreasing $\Cc^2$ function that 
satisfies $\lambda(s\le 0) = 0$ and $\lambda(s>0) >0$, and where $\g\in\R^d$ stands for the gravity vector. In order to 
ease the reading, we have set the porosity equal to $1$ in~\eqref{eq:Richards} and neglected the residual saturation. 
The pressure and the water content are supposed to be linked by some monotone relation 
\be\label{eq:capi}
\ov s = S(\ov p), \qquad \text{a.e. in}\;Q
\ee
where $S$ is a non-decreasing function from $\R$ to $[0,1]$. In what follows, we assume that 
$S(p)=1$ for all $p \ge 0$, that corresponds to assuming that the porous medium is water wet, 
and that $S \in L^1(\R_-)$, implying in particular that $\lim_{p\to-\infty} S(p) = 0$. 
As a consequence of the Lipschitz continuity of $\lambda$ and of the integrability of $S$ on $\R_-$, 
one has 
\be\label{eq:Kirchbound}
\lambda(S) \in L^1(\R_-).
\ee
We denote by $p_\star = \sup \{p\; | \; S(p) = 0\},$ 
with the convention that $p_\star = -\infty$ if $\{p \in \R\; | \; S(p) = 0\} = \emptyset.$

Typical behaviors of $\lambda$ and $S$ are depicted in Figure~\ref{fig:func}.
\begin{figure}[htb]
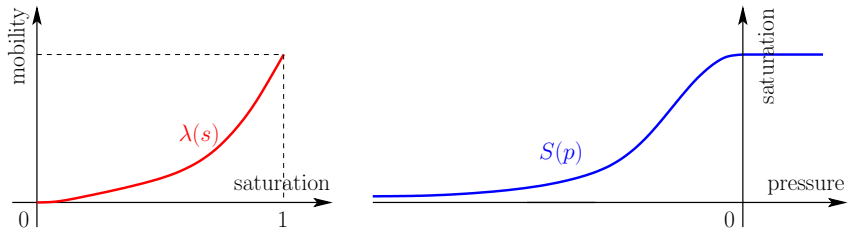

\begin{center}
\resizebox{!}{3cm}{\input{lambda.pspdftex}}
\quad
\resizebox{!}{3cm}{\input{invcapi.pspdftex}}
\end{center}
\caption{The mobility function $\lambda:[0,1] \to \R_+$ is increasing and satisfies $\lambda(0)=0$. 
The saturation function $S:\R\to [0,1]$ is non-decreasing, constant equal to $1$ on $\R_+$ and increasing 
on $(p_\star, 0)$ for some $p_\star \in [-\infty, 0)$.}
\label{fig:func}
\end{figure}

\begin{rem}
In the case where the domain $\O$ and the finite time $T$ are large, a classical hyperbolic 
scaling consisting in replacing $(\x,t)$ by $(\x/\eps, t/\eps)$ leads to the problem
$$
\p_t S^\eps(\ov p) - \div\left(\lambda(S^\eps(\ov p))\left(\grad \ov p - \g\right)\right) = 0, 
$$
where the function $S^\eps$ is deduced from $S$ by 
$$
S^\eps(p) = S(p/\eps), \qquad \forall p \in \R.
$$
Letting $\eps$ tend to $0$ leads the maximal monotone capillary pressure graph
$$
S^0(p) = \sign_+(p) =  \begin{cases}
0 & \text{if}\;p <0, \\ 
[0,1] & \text{if}\;p = 0, \\
1 & \text{if}\; p>0.
\end{cases}
$$
The Richards equation then degenerates into a hyperbolic-elliptic problem. 
Our purpose can be extended to the degenerate case even though the graph $S^0$ 
lack regularity thanks to the so-called semi-smooth Newton method (cf.~\cite{QS93}).
Moreover, because of the hyperbolic degeneracy, additional entropy criterions \emph{\`a la} 
Carrillo~\cite{Car99} are required in order to characterize the relevant solution. 
In order to simplify our purpose as much as possible, we focus on the case $\eps>0$. 
\end{rem}

The problem~\eqref{eq:Richards}--\eqref{eq:capi} is complemented by the initial condition 
\be\label{eq:init}
\ov s_{|_{t=0}} = s_0 \in L^\infty(\O;[0,1]),
\ee
and by Dirichlet boundary conditions on the pressure:
\be\label{eq:Dirichlet}
\ov p_{|_{\p\O\times(0,T)}} = p_D.
\ee
The regularity requirements on the boundary condition will be specified later on.

At least from a mathematical point of view, 
it is natural to solve~\eqref{eq:Richards} by choosing $p$ or the Kirchhoff transform $u$ (to be defined 
later on at~\eqref{eq:u})
as a primary unknown then to deduce $s = S(p) = \widetilde S(u)$. 
However, this approach lacks efficiency when one aims to solve the Richards equation numerically, 
especially for dry media, i.e, when the saturation $s$ is close to $0$. In this latter situation, it turns out 
that the Newton methods encounters difficulties to converge for solving the 
nonlinear system obtained thanks to standard implicit numerical methods 
(say $\bbP1$-Finite Elements \cite{LA98}, mixed finite elements~\cite{BP99}, or Finite Volumes~\cite{EGH99, FL01}). 
A better choice as a primary unknown in the dry regions is the saturation $s$, $p$ or $u$ being computed thanks 
to the inverse function of $S$ or $\widetilde S$ respectively. But choosing the saturation $s$ as the primary variable yields 
difficulties in the saturated regions, i.e., where $s=1$. Hence, a classical approach for 
solving numerically the Richards equation consists in applying the so-called \emph{variable switch}, 
that consists in changing the primary variable following the physical configuration (see, e.g., \cite{DP99}).

\subsection{Monotone parametrization of the graph}\label{ssec:parametrization}

The main feature of our contribution consists in parametrizing the graph $S$ in order to stabilize the Newton algorithm
without implementing the possibly complex variable switch procedure. 
This procedure is inspired from the one proposed by J. Carrillo~\cite{Car03} 
(see also~\cite{MV08} for numerical issues) to deal with hyperbolic scalar conservation 
laws with discontinuous flux w.r.t. the unknown. 
Let us introduce two continuously differentiable nondecreasing functions
$$s: (\tau_\star, \infty) \to [0,1]\quad \text{and}\quad p: (\tau_\star, \infty) \to (-\infty,\infty),$$
where $\tau_\star<0$ may be equal to $-\infty$, such that $p(0) = 0$ and 
$$
\ov s \in S(\ov p) \quad \Leftrightarrow \quad \text{there exists} \; \tau \ge \tau_\star \; \text{s.t.}\; \ov s = s(\tau) \; \text{and}\; \ov p = p(\tau).
$$
This enforces in particular that $\lim_{\tau\to \tau_\star}s(\tau) = 0$ and $\lim_{\tau \to \tau_\star}p(\tau) = p_\star$.
In the case where $\tau_\star > -\infty$, the functions $s$ and $p$ are then continuously extended into constants 
on $(-\infty,\tau_\star)$. 
It is assumed that the parametrization function $s$ satisfies 
\be\label{eq:hyp-energy-bounded}
\text{$1-s \in L^1(\R_+)$ and $s \in L^1(\R_-)$}.
\ee
The Kirchhoff transform $u:[\tau_\star, +\infty) \to \R$ is defined by 
\be\label{eq:u}
u(\tau) = 
\int_0^\tau \lambda(s(a)) p'(a) {\rm d}a, \qquad  \forall \tau \ge \tau_\star.
\ee
It follows from the integrability property \eqref{eq:Kirchbound} that 
\be\label{eq:u_star}
u_\star:= \lim_{\tau \searrow \tau_\star} u(\tau) \quad \text{is finite}.
\ee

For technical reasons, the function $u$ is artificially extended into 
a continuous onto function from $\R$ to $\R$ by setting
\be\label{eq:u-extended}
u(\tau)  = \tau-\tau_\star + u_\star, \qquad \forall \tau <\tau_\star. 
\ee
However, as it will appear later on (cf. Lemma~\ref{lem:pos}), the choice of this extension has no influence on the result.

It is assumed throughout this paper that the parametrization is not degenerated, i.e., 
$$
s'(\tau)+p'(\tau) >0 \quad \text{for a.e.}\;\tau \ge \tau_\star, 
$$
or equivalently
$$
s'(\tau)+u'(\tau) >0 \quad \text{for a.e.}\;\tau \in \R
$$
since $\lambda(s(\tau)) >0$ for all $\tau >\tau_\star$. 
Since $S$ is an absolutely continuous function, this implies in particular that $p'>0$ a.e. in $\R_+$.

Such a parametrization of the graph $S$ always exists 
but is not unique. For instance, on can choose the parametrizations defined by $p(\tau) = \tau$ 
or $u(\tau) = \tau$. 
As it will appear in the analysis carried out in the core of the paper, a convenient parametrization 
should satisfy: there exist $\alpha_\star>0$ and $\alpha^\star \ge \alpha_\star$ such that 
\be\label{eq:param-nondeg}
\alpha_\star \le \max(s'(\tau), u'(\tau)) \le \alpha^\star, \qquad \forall \tau \in \R.
\ee
Thus, we assume that~\eqref{eq:param-nondeg} holds for the analysis. 
This ensures in particular that the functions $s$ and $u$ are 
Lipschitz continuous:
\be\label{eq:su-Lip}
\|s'\|_\infty \le \alpha^\star, \qquad \|u'\|_\infty \le \alpha^\star.
\ee

For technical reasons that will appear in the analysis, 
we also assume that there exists $C>0$ such that 
\be\label{eq:u-coercive}
\tau \le C(u(\tau)+1), \qquad \forall \tau \ge 0.
\ee
It is also assumed that 
\be\label{eq:liminf-p'}
\underset{\tau \searrow \tau_\star}{\rm liminf} \; p'(\tau) >0.
\ee
This assumption is very naturally satisfied for any non-degenerate parametrization 
in the sense of~\eqref{eq:param-nondeg} of a reasonable function $S$, but unphysical counterexamples 
can be designed, enforcing us to set \eqref{eq:liminf-p'} as an assumption.

The function $s$ from $[\tau_\star, 0]$ to $[0,1]$ is nondecreasing and onto. 
Therefore, one can define the function $s^{-1}:[0,1] \to [\tau_\star, 0]$ by 
\be\label{eq:s-1}
s^{-1}(a) = \min\{ x \ge 0 \; | \; s(x) = a \}, \qquad \forall a \in [0,1].
\ee
This allows to define an initial data $\tau_0$ as $\tau_0 = s^{-1}(s_0)$ such that 
$s(\tau_0) = s_0$.

Choosing $\tau$ as the primary variable leads to the following doubly degenerate 
parabolic equation 
$$
\p_t s(\tau) - \div\Big(\lambda(s(\tau))\big(\grad p(\tau) - \g \big) \Big) = 0 \quad \text{in}\;Q.
$$
This equation 
 turns to 
\be\label{eq:Richards-tau}
\p_t s(\tau) + \div\Big( \lambda(s(\tau)) \g - \grad u(\tau)  \Big) = 0 \quad \text{in}\;Q, 
\ee
at least if $\tau \ge \tau_\star$ (this will be ensured, cf. Theorem~\ref{thm:weak}).
It is relevant to impose the boundary condition 
\be\label{eq:Dirichlet-tau}
\tau_{|_{\p\O\times(0,T)}} = p^{-1}(p_D)=:\tau_D \ge \tau_\star.
\ee
as a counterpart of~\eqref{eq:Dirichlet}. It is finally assumed that $\tau_D$ can be extended 
to the whole $\O\times(0,T)$ in a way such that 
\be\label{Dirichlet-tau-reg}
\tau_D \in C^1(\ov Q), \qquad \text{with}\quad \tau \ge \tau_\star.
\ee
The regularity required on $\tau_D$ is not optimal and can be relaxed. However,
the treatment of the boundary condition is not central in our purpose, hence we stick to~\eqref{Dirichlet-tau-reg}

\begin{Def}\label{Def:weak}
A measurable function $\tau:Q \to \R$ is said to be a weak solution to the problem~\eqref{eq:Richards-tau}, 
\eqref{eq:init}, \eqref{eq:Dirichlet-tau} if $u(\tau) - u(\tau_D) \in L^2\big((0,T);H^1_0(\O)\big)$, if $\p_t s(\tau) \in L^2\big((0,T);H^{-1}(\O)\big)$, 
and if, for all $\varphi \in C^\infty_c(\O \times [0,T); \R)$, one has
$$
\iint_Q s(\tau) \p_t \varphi \d\x\d t + \int_\O s_0\varphi(\x,0) \d\x \\
+ \iint_Q \Big(\lambda(s(\tau))  \g - \grad u(\tau)\Big)\cdot \grad \varphi \d\x\d t= 0.
$$
\end{Def}

The following statement summarizes known results about the weak solutions.
\begin{thm}\label{thm:weak}
There exists a unique weak solution $\tau:\O \to \R$ to the problem~\eqref{eq:Richards-tau}, 
\eqref{eq:init}, \eqref{eq:Dirichlet-tau} in the sense of Definition~\ref{Def:weak}. 
Moreover, $\tau \ge \tau_\star$ a.e. in $Q$,  $s(\tau) \in C([0,T];L^p(\O))$ for all $p \in [1,\infty)$, 
and, given two solutions $\tau, \wh \tau$ corresponding to two initial data $s_0$ and $\wh s_0$, 
we have
\be\label{eq:compL1}
\int_\O \left(s(\tau(\x,t)) - s(\wh\tau(\x,t))\right)^\pm \d\x \le \int_\O \left(s_0(\x) - \wh s_0(\x)\right)^\pm \d\x, \qquad 
\forall t \in [0,T].
\ee
\end{thm}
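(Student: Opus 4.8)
The plan is to view~\eqref{eq:Richards-tau}--\eqref{eq:Dirichlet-tau} together with~\eqref{eq:init} as a quasilinear elliptic--parabolic problem of the type treated by Alt and Luckhaus (existence) and by Otto (uniqueness and $L^1$-comparison), and to transport their results to the present parametrized setting. Both $\tau\mapsto s(\tau)$ and $\tau\mapsto u(\tau)$ are continuous and nondecreasing, and $u$ is onto $\R$, so eliminating $\tau$ expresses the saturation as a maximal monotone function $\wt\beta$ of the Kirchhoff variable $w=u(\tau)$, the graph of $\wt\beta$ carrying vertical segments exactly where $u$ is flat while $s$ is strictly increasing, by the nondegeneracy $s'+u'>0$. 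In the variable $w$ the equation reads formally $\p_t\wt\beta(w)-\Delta w+\div\big(\lambda(\wt\beta(w))\g\big)=0$, which has precisely the structure for which the classical theory applies: $s$ and $\lambda$ are bounded on $[0,1]$, so the convective term is bounded and Lipschitz, and~\eqref{eq:hyp-energy-bounded} makes the associated energy finite.

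For existence I would regularize the degeneracy---for instance by replacing $\wt\beta$ by the strictly increasing graph $\wt\beta+\eps\,\mathrm{id}$, equivalently enforcing $s'+u'\ge\eps>0$---solve the nondegenerate problems by a Galerkin argument, and pass to the limit $\eps\to0$. The decisive a priori bounds come from testing the weak formulation with $u(\tau)-u(\tau_D)$: the diffusion term yields control of $\grad u(\tau)$ in $L^2(Q)$, the time term is the derivative of a convex energy $\int_\O\Psi(\tau)\,\d\x$, and~\eqref{eq:su-Lip}, the coercivity~\eqref{eq:u-coercive} and the integrability~\eqref{eq:hyp-energy-bounded} turn these into uniform bounds; from the equation one then reads off $\p_t s(\tau)\in L^2\big((0,T);H^{-1}(\O)\big)$ as required by Definition~\ref{Def:weak}. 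Strong compactness of $s(\tau)$ and $u(\tau)$ follows from these estimates and a time-translation (Aubin--Simon type) argument, allowing passage to the limit. Finally, $s(\tau)\in C\big([0,T];H^{-1}(\O)\big)$ because $\p_t s(\tau)\in L^2(H^{-1})$; combined with the uniform bound $0\le s(\tau)\le1$ this gives $C_w\big([0,T];L^p(\O)\big)$, upgraded to strong continuity $s(\tau)\in C\big([0,T];L^p(\O)\big)$, $p\in[1,\infty)$, through the continuity in time of the energy.

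The lower bound $\tau\ge\tau_\star$ a.e.\ in $Q$---the content of the announced Lemma~\ref{lem:pos}, which is what makes the arbitrary extension~\eqref{eq:u-extended} irrelevant---follows from comparison with the stationary state $\tau\equiv\tau_\star$. Indeed, on $\{s(\tau)=0\}$ the flux reduces to $-\grad u(\tau)$ and the equation degenerates to $\Delta u(\tau)=0$; since $\tau_0,\tau_D\ge\tau_\star$ give $u\ge u_\star$ on the parabolic boundary, a minimum-principle argument forces $u(\tau)\ge u_\star$, equivalently $\tau\ge\tau_\star$.

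The heart of the matter, and the step I expect to be the main obstacle, is the $L^1$-comparison~\eqref{eq:compL1}, uniqueness being the case $\wh s_0=s_0$. I would use the doubling-of-variables technique of Kru\v{z}kov as adapted to degenerate parabolic problems by Carrillo~\cite{Car99} and Otto: doubling the time and space variables, passing to the diagonal, and testing the difference of the two weak formulations against a regularization of $\sign^+\!\big(s(\tau)-s(\wh\tau)\big)$. The structural fact that makes the parametrization well chosen is that $s$ and $u$ are both nondecreasing functions of the same variable $\tau$, so that $\big(s(\tau)-s(\wh\tau)\big)\big(u(\tau)-u(\wh\tau)\big)\ge0$ pointwise; this co-monotonicity guarantees that the degenerate diffusion term contributes with the correct dissipative sign, while the monotonicity of $\lambda(s(\cdot))$ controls the convective term. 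The delicate points are the justification of the Kato inequality across the degeneracy $\{s=0\}$ and the handling of the boundary: because the two solutions share the Dirichlet datum $\tau_D$, their traces coincide and, using the regularity~\eqref{Dirichlet-tau-reg}, the boundary contribution drops when the spatial test function is let to approach $\1_\O$, which yields~\eqref{eq:compL1}.
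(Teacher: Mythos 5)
Your proposal is correct and takes essentially the same route as the paper: the authors do not prove Theorem~\ref{thm:weak} themselves, but simply invoke the results you sketch---Alt and Luckhaus~\cite{AL83} for existence, Otto~\cite{Otto96} (see also \cite{Car94,GMT94}) for uniqueness and the $L^1$-comparison principle~\eqref{eq:compL1}, and \cite{Cont_L1} for the time continuity of the saturation. Your expanded outline (energy estimates from testing with $u(\tau)-u(\tau_D)$, doubling of variables \`a la Carrillo--Otto, upgrade from weak to strong time continuity) is consistent with how those cited works proceed.
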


Existence of weak solutions have been proved by by Alt and Luckhaus in their seminal paper~\cite{AL83}.
We refer to~\cite{Otto96} (see also \cite{Car94,GMT94}) for extended details on the uniqueness proof and on the 
comparison principle~\eqref{eq:compL1}. The time continuity of the saturation can be proved as in~\cite{Cont_L1}.

\subsection{Outline of the paper}\label{ssec:outline}

In \S\ref{sec:scheme}, we present an implicit monotone Finite Volume scheme~\cite{EGH00}
designed for approximating the entropy solution $\tau$ of~\eqref{eq:init} and 
\eqref{eq:Richards-tau}--\eqref{eq:Dirichlet-tau}. 
First, we describe in~\S\ref{ssec:mesh} how the domains $\O$ (and then $Q$) has to be meshed. 
In particular, the mesh has to fulfill the so-called~\emph{orthogonality condition} 
so that the diffusion fluxes can be discretized using a simple two-point flux approximation~\cite{Tipi}.
The Finite Volume scheme is described in~\S\ref{ssec:scheme}. 
This scheme yields a nonlinear system of equations 
\be\label{eq:syst-n}
\Ff_n(\btau^n) = \0, \qquad \forall n \in \{1,\dots, N\}
\ee
to be solved at each time step. 
The existence and the uniqueness of the solution $\btau^n$ 
to the nonlinear system~\eqref{eq:syst-n} is proved at \S\ref{ssec:anal-scheme}. 

Once we know that the scheme~\eqref{eq:syst-n} admits one unique solution $\btau^n$, 
we discuss its effective computation thanks to Newton's method in~\S\ref{sec:Newton}. 
It is in particular proved in~\S\ref{ssec:UND} that the jacobian matrix is uniformly non-degenerate, 
so that we can use Newton-Kantorovich theorem to claim the convergence of Newton's method.  
The quantification of the error linked to the inexact resolution of the nonlinear system is performed 
in~\S\ref{ssec:inexact}.
Finally, some numerical results are presented in~\S\ref{sec:num} in order to illustrate the efficiency 
of our approach.

\section{The Finite Volume scheme}\label{sec:scheme}

\subsection{Discretization of $Q$}\label{ssec:mesh}

In this work, we only consider cylindrical discretizations of $Q$ 
that consist in discretizing space and time separately.

\subsubsection{Admissible mesh of $\O$}
The approximation of the diffusive fluxes we propose relies 
on the so-called \emph{two-point flux approximation}.
This approximation is consistent if the problem is isotropic and 
if the mesh satisfies the so-called orthogonality condition~(see e.g. \cite{Tipi}).

\begin{Def}[admissible mesh of $\O$]\label{Def:mesh}
An \emph{admissible mesh} $\left(\Tt, \Ee, \left(\x_K\right)_{K\in\Tt}\right)$ of $\O$ is given 
by a set $\Tt$ of disjointed open bounded convex subsets 
of $\O$ called \emph{control volumes}, a family $\Ee$ of subsets of $\overline\O$ 
called \emph{edges} contained 
in hyperplanes of $\R^d$ with strictly positive measure, and a family of points
$(\x_K)_{K\in\Tt}$ (the so-called \emph{cell centers}). 
It is assumed that the mesh integrates the whole $\O$, i.e., $\bigcup_{K\in\Tt} \ov K = \ov \O$.
The boundary of the control volumes are made of edges, i.e., for all $K\in\Tt$, there 
exists a subset $\Ee_K$ of $\Ee$ such that $\partial K=\bigcup_{\sig\in\Ee_K}\overline{\sig}$. 
Furthermore, $\Ee=\bigcup_{K\in\Tt}\Ee_K$. 
For any $(K,L)\in\Tt^2$ with $K\neq L$, either the $(d-1)$-dimensional Lebesgue measure of
    $\overline K\cap\overline L$ is $0$, or $\overline K\cap\overline L=\overline\sig$ for some $\sig\in\Ee$. 
    In the latter case, we write $\sig=K|L$. We denote by 
    $\Ee_{\rm int}=\left\{\sig\in\Ee, \ \exists (K,L)\in\Tt^2\ \sig=K|L\right\}$
    the set of the internal edges, and by 
    $\Ee_{\rm ext}=\{\sig\in\Ee,\ \sig\subset\partial\O\},\ \Ee_{K,\rm ext} = \Ee_K \cap \Ee_{\rm ext}$
    of the boundary edges.
    Finally, 
the family of points $(\x_K)_{K\in\Tt}$ is such that $\x_K\in K$ (for all $K\in\Tt$) and, if
$\sig=K|L$, it is assumed that the straight line $(\x_K,\x_L)$ is orthogonal to $\sig$.
For all $\sig \in \Ee_{\rm ext}$, there exists one unique cell $K$ such that $\sig \in \Ee_K$. 
Then we denote by $\x_\sig$ the projection of $\x_K$ over the hyperplane containing $\sig$, 
and we assume that $\x_\sig$ belongs to $\sig$.
\end{Def}

In what follows, we denote by $m_K$ the $d$-dimensional Lebesgue measure of the 
control volume $K\in\Tt$, and by $m_\sig$ the $(d-1)$-Lebesgue measure of the edge $\sig \in \Ee$. 
For all $\sig\in\Ee_K$, we denote by $d_{K,\sig} = d(\x_K,\x_\sig)$. Since $\sig=K|L$ is supposed to 
be orthogonal to $\x_K-\x_L$, then $d(\x_K,\x_L) = d_{K,\sig}+d_{L,\sig}=:d_{\sig}$.
We define the 
\emph{transmissibilities} $\left(A_\sig\right)_{\sig \in \Ee}$ by  
$$A_\sig = 
\begin{cases}
\frac{m_\sig}{d_{\sig}} &\text{if} \; \sig = K|L \in \Ee_{\rm int},\\
\frac{ m_\sig }{d_{K,\sig}}&\text{if} \; \sig \in \Ee_K\cap \Ee_{\rm ext}.
\end{cases}
$$
The space of the degrees of freedom (including those prescribed by the boundary condition) is
$$
\X_\Tt = \left\{\bv = \left(v_K, v_\sig \right)_{K\in\Tt, \sig \in \Ee_{\rm ext}} \right\} \simeq \R^{\#\Tt + \#\Ee_{\rm ext}}, 
$$
while the interior degrees of freedom (for which a nonlinear system has to be solved) are described by the space 
$$
\X_{\Tt, {\rm int}} = \left\{\bv = \left(v_K \right)_{K\in\Tt} \right\} \simeq \R^{\#\Tt}.
$$

\subsubsection{Time and space-time discretizations}

\begin{Def}[Time discretizations of $(0,T)$]\label{Def:time-disc}
A {\emph time discretization}
of $(0,T)$ is given by an integer value $N$ and a sequence of real values 
$0 = t^0 < t^1 < \ldots < t^N = T$. For all $n\in\{1,\dots,
 N\}$ the time step is defined by $\dt^n= t^n - t^{n-1}$.
\end{Def}

\begin{Def}[Space-time discretizations of $Q$]\label{Def:space-time-disc}
A space-time discretization $\Dd$ of $Q$ is a family
$$\Dd=(\Tt,\Ee,(\x_K)_{K\in\Tt},(t^n)_{n\in\{0,\dots,N\}}),$$
where $(\Tt,\Ee,(\x_K)_{K\in\Tt})$ is an admissible mesh of $\O$ in the sense of Definition~\ref{Def:mesh}
and $(N,(t^n)_{n\in\{0,\dots,N\}})$ is a discretization of $(0,T)$ in the sense of Definition~\ref{Def:time-disc}.
\end{Def}
The spaces of the degrees of freedom are defined by
\begin{equation}\label{eq:X_Dd}
\X_{\Dd} = \left\{ \bv = \left(v_K^n, v_\sig^n\right)_{K\in \Tt, \sig \in \Ee_{\rm ext}, 1 \le n \le N} \right\} \simeq \R^{(\#\Tt+\#\Ee)  \times N}.
\end{equation}  
and 
\begin{equation}\label{eq:X_Dd}
\X_{\Dd, \rm int} = \left\{ \bv = \left(v_K^n\right)_{K\in \Tt, 1 \le n \le N} \right\} \simeq \R^{\#\Tt  \times N}.
\end{equation}  
Let $\bv = \left(v_K^n\right)_{K,n} \in \X_\Dd$, then we denote by $\bv^n = \left(v_K^n\right)_{K} \in \X_\Tt$ 
for $n \in \{1,\dots, N\}$.

\subsubsection{Reconstruction operators}\label{sssec:operators}

Following the approach proposed in~\cite{DEGH13}, we introduce 
reconstruction operators. First, we define the linear operator
$\pi_\Tt: \X_\Tt \to L^\infty(\O)$ by 
$$
\pi_\Tt \bv(\x) = v_K \; \text{ if } \x\in K, \qquad \forall \bv = {(v_K, v_\sig)}_{K\in\Tt, \sig \in \Ee_{\rm ext}}.
$$
It is extended into the time-and-space reconstruction linear operator $\pi_\Dd: \left(\X_\Tt\right)^N \to L^\infty(Q)$
by setting
$$
\pi_\Dd \bv(\x,t) = v_K^n \; \text{ if } (\x,t)\in K\times(t^{n-1},t^n], 
\qquad \forall \bv = {(v_K^n, v_\sig^n)}_{K\in\Tt, \sig \in \Ee_{\rm ext}}^{1\le n \le N} \in \left(\X_\Tt\right)^N.
$$

The study to be performed also requires the introduction of a so-called 
discrete gradient. We will remain sloppy about the construction of the discrete 
gradient. We only highlight the properties we will use in the sequel.

\begin{lem}\label{lem:reconstruct}
Let $\Tt$ be an admissible discretization of $\O$ in the sense of Definition~\ref{Def:mesh}. 
There exists a linear operator $\grad_\Tt:\X_\Tt \to L^\infty(\O)^d$ such that 
for all $\bv = \left(v_K, v_\sig\right)_{K,\sig}$ and $\bw = \left(w_K, w_\sig\right)_{K,\sig}$ in $\X_\Tt$, one has 
\begin{align}\label{eq:prod-scal}
\int_\O \grad_\Tt \bv \cdot \grad_\Tt \bw\d\x = & \sum_{\sig = K|L \in \Ee_{\rm int}} 
A_\sig (v_K - v_L)(w_K - w_L) \\
&  + \sum_{K\in\Tt} \sum_{\sig \in \Ee_{K,\rm ext}} 
A_\sig (v_K - v_{\sig}) (w_K-w_\sig). \nonumber
\end{align}
Moreover, let $\left(\Tt_m\right)_{m\ge 1}$ be a sequence of admissible discretizations of $\O$ 
in the sense of Definition~\ref{Def:mesh} such that $\size(\Tt_m)$ tends to $0$ while $\reg({\Tt_m})$ 
remains bounded as $m$ tends to $\infty$, and let $\left(\bv_m\right)_{m\ge 1}$ be a family such that 
$\bv_m \in \X_{\Tt_m}$ for all $m\ge1$ and 
$$\| \pi_\Tt \bv_m \|_{L^2(\O)} +  {\|\grad_{\Tt_m} \bv_m \|}_{L^2(\O)^d} \le C, \qquad \forall m \ge 1,$$
then 
there exists $v \in H^1(\O)$ such that, up to an unlabeled subsequence, one has 
\be\label{eq:compact}
\pi_{\Tt_m} \bv_m \underset{m\to\infty}\longrightarrow v \quad\text{in}\; L^2(\O),
\ee
and 
\be\label{eq:consist}
\grad_{\Tt_m} \bv_m \underset{m\to\infty}\longrightarrow \grad v \quad\text{weakly in}\; L^2(\O)^d.
\ee
Additionally, if $\varphi \in C^\infty_c(\O)$ is discretized into 
$\bvarphi_m= \left(\varphi_K\right)_{K\in\Tt_m} \in \X_{\Tt_m}$ by setting 
$$
\varphi_K  = \frac1{m_K} \int_K \varphi(\x) \d\x, \qquad \forall K \in \Tt_m, 
$$
then $\grad_{\Tt_m} \bvarphi_m$ converges strongly in $L^2(\O)^d$ towards $\grad\varphi$ as $m$ tends to $+\infty$.
\end{lem}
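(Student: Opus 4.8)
The plan is to construct $\grad_\Tt$ explicitly on a pyramidal sub-partition of $\O$, and then to read off the three assertions from the geometry of that construction. First I would split each cell $K$ into the pyramids $D_{K,\sig}$ with apex $\x_K$ and base $\sig \in \Ee_K$, whose $d$-dimensional measure is $m_{D_{K,\sig}} = m_\sig d_{K,\sig}/d$ by the orthogonality condition; for an internal edge $\sig = K|L$ the two pyramids glue into the diamond $D_\sig = D_{K,\sig}\cup D_{L,\sig}$ of measure $m_\sig d_\sig/d$. Following the gradient reconstruction of~\cite{DEGH13} associated with the two-point flux approximation, I would define $\grad_\Tt\bv$ as a field that is constant on each piece of this sub-partition (so that it automatically belongs to $L^\infty(\O)^d$), built from a consistent cell contribution corrected by a normal stabilization term supported in each $D_{K,\sig}$ and weighted so that the energy carried by the edge $\sig$ equals exactly $A_\sig$ times the squared finite difference across $\sig$. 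Expanding $\int_\O \grad_\Tt\bv\cdot\grad_\Tt\bw\,\d\x$ over the pyramids and using $\x_L - \x_K = d_\sig\,\n_{K,\sig}$ then collapses the cell and cross terms and leaves precisely the two-point expression, with the boundary edges contributing the second sum through the values $v_\sig$; this is~\eqref{eq:prod-scal}.

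For the compactness statement I would argue in two steps. The uniform bound on $\|\pi_{\Tt_m}\bv_m\|_{L^2} + \|\grad_{\Tt_m}\bv_m\|_{L^2}$ controls, through~\eqref{eq:prod-scal} taken with $\bw=\bv_m$, the discrete $H^1$ seminorm, which in turn bounds the $L^2$ norm of the space translates $\pi_{\Tt_m}\bv_m(\cdot+\boldsymbol\eta)-\pi_{\Tt_m}\bv_m$ by $C|\boldsymbol\eta|\bigl(|\boldsymbol\eta|+\size(\Tt_m)\bigr)$. The Kolmogorov--Riesz--Fr\'echet criterion then yields relative compactness in $L^2(\O)$, hence a subsequence with $\pi_{\Tt_m}\bv_m \to v$; passing to the limit in the translate estimate shows $v \in H^1(\O)$, which gives~\eqref{eq:compact}. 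Since $\grad_{\Tt_m}\bv_m$ is bounded in $L^2(\O)^d$ it converges weakly, along a further subsequence, to some $\boldsymbol G$, and the crux is to identify $\boldsymbol G = \grad v$: testing against $\boldsymbol\psi \in C^\infty_c(\O)^d$ and performing the discrete integration by parts attached to the reconstruction, I would pass to the limit using the consistency (limit-conformity) of the scheme to obtain $\int_\O \boldsymbol G\cdot\boldsymbol\psi\,\d\x = -\int_\O v\,\div\boldsymbol\psi\,\d\x$, which forces $\boldsymbol G = \grad v$ and yields~\eqref{eq:consist}.

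The last assertion rests on the consistency of the reconstruction for regular fields. For $\varphi \in C^\infty_c(\O)$ the cell averages $\varphi_K$ differ from the nodal values $\varphi(\x_K)$ by $O(\size(\Tt_m))$, so on each diamond $\grad_{\Tt_m}\bvarphi_m$ equals $\grad\varphi(\x_K)$ up to a remainder controlled by $\size(\Tt_m)\|D^2\varphi\|_\infty$; since the stabilization term vanishes to leading order on affine data and the cell contribution is exact on affine functions, this error is $o(1)$ in each pyramid. Summing the squared errors over the sub-partition and using that $\reg(\Tt_m)$ stays bounded (so that the pyramids remain non-degenerate and their measures sum to $|\O|$) gives $\|\grad_{\Tt_m}\bvarphi_m - \grad\varphi\|_{L^2} \to 0$.

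I expect the main obstacle to be the simultaneous fulfilment of~\eqref{eq:prod-scal} and of the consistency required in the last two paragraphs. A gradient built only from the normal finite differences reproduces the two-point energy but, in dimension $d>1$, fails to recover the tangential part of $\grad\varphi$ and therefore cannot converge strongly; conversely a naively consistent cell gradient does not match the transmissibilities $A_\sig$. Reconciling the two is exactly what forces the stabilized diamond reconstruction of~\cite{DEGH13}, and the genuinely delicate point is the limit-conformity argument identifying the weak limit of the discrete gradients with $\grad v$.
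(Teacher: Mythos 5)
Your construction is, in substance, exactly the one the paper itself appeals to: the paper gives no proof of Lemma~\ref{lem:reconstruct}, but remarks that the usual two-point reconstruction of \cite[\S 4.1]{ACM} violates \eqref{eq:prod-scal}, and that a suitable operator is furnished by the SUSHI scheme on ``super-admissible meshes'' \cite[Lemma 2.1]{EGH10} --- i.e., precisely the stabilized pyramidal reconstruction you describe, with interior edge values eliminated by interpolation along $(\x_K,\x_L)$. Your treatment of \eqref{eq:prod-scal}, of the compactness \eqref{eq:compact} via translate estimates and the Kolmogorov--Riesz criterion, and of the identification \eqref{eq:consist} by discrete integration by parts (limit-conformity) follows the standard finite-volume/gradient-scheme arguments and is sound in outline.

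The genuine gap is in your final paragraph, on strong consistency for the \emph{cell-average} discretization. You claim that since $\varphi_K-\varphi(\x_K)=O(\size(\Tt_m))$, the reconstructed gradient deviates from $\grad\varphi$ by $O\big(\size(\Tt_m)\,\|D^2\varphi\|_\infty\big)$. That bookkeeping is wrong: any operator satisfying \eqref{eq:prod-scal} has $\|\grad_{\Tt_m}\bvarphi_m\|^2_{L^2(\O)^d}=\sum_{\sig=K|L} A_\sig(\varphi_K-\varphi_L)^2$ (plus boundary terms), so perturbations of the values are divided by $d_\sig\sim\size(\Tt_m)$, and an $O(\size)$ perturbation of the values yields an $O(1)$ perturbation of the gradient, not $O(\size)$. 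The discrepancy $\varphi_K-\varphi(\x_K)=\grad\varphi(\x_K)\cdot(\ov\x_K-\x_K)+O(\size^2)$ is genuinely of order $\size$, because the orthogonality center $\x_K$ (circumcenter, Vorono\"i seed, \dots) is not the barycenter $\ov\x_K$. Moreover the defect cannot be repaired by a cleverer reconstruction: in one dimension take cells of uniform length $h$ with $\x_K$ placed alternately at distance $h/4$ to the left and to the right of the cell midpoints; this mesh is admissible with bounded regularity, $d_\sig$ alternates between $h/2$ and $3h/2$, and for cell averages of a $\varphi$ supported away from the boundary one computes $\sum_\sig A_\sig(\varphi_K-\varphi_L)^2\to\tfrac43\int|\varphi'|^2\,\d x$, whereas strong convergence together with \eqref{eq:prod-scal} would force the limit $\int|\varphi'|^2\,\d x$. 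Hence on such mesh sequences \emph{no} operator can satisfy both \eqref{eq:prod-scal} and the last assertion of the lemma: to close this step one must either discretize by the point values $\varphi_K=\varphi(\x_K)$ --- for which your Taylor argument does go through, the stabilization residuals being $O(\size^2)/d_{K,\sig}=O(\size)$ --- or read into $\reg(\Tt_m)$ a (nonstandard) control of the barycenter offsets $|\x_K-\ov\x_K|/\size(\Tt_m)^2$. This is the one point where your proof, as written, would fail.
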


Note that the discrete gradient reconstruction used in~\cite[\S4.1]{ACM} 
(which is the usual one for two-point flux approximations of diffusion operators)
does not meet the requirements of Lemma~\ref{lem:reconstruct} since \eqref{eq:prod-scal}
is not fulfilled. However, a reconstruction as prescribed by Lemma~\ref{lem:reconstruct} 
can be obtained as a particular case of the so-called SUSHI scheme
on ``super-admissible meshes" (cf.~\cite[Lemma 2.1]{EGH10}).

\smallskip

Finally, the reconstruction operators $\pi_\Tt:\X_\Tt \to L^\infty(\O)$ and $\grad_\Tt:\X_\Tt \to L^\infty(\O)^d$ 
are extended to the space-times framework into 
$\pi_\Dd:\X_\Dd \to L^\infty(Q)$ and 
$\grad_\Dd:\X_\Dd \to L^\infty(Q)^d$ defined for all 
$\bv = \left(\bv^n\right)_{1\le n\le N} \in \X_\Dd$ by 
\be\label{eq:piDdgradDd}
\pi_\Dd \bv(\cdot,t)  = \pi_\Tt \bv^n, \qquad
\grad_\Dd \bv(\cdot,t)  = \grad_\Tt \bv^n, \qquad \forall t \in (t^{n-1},t^n].
\ee

\subsection{The implicit finite volume scheme}\label{ssec:scheme}

The initial data $s_0$ is discretized into $\bs^0 = \left(s_K^0\right)_{K\in\Tt} \in \X_{\Tt, \rm int}$ by setting 
\be\label{eq:sK0}
s_K^0 = \frac1{m_K}\int_K s_0(\x) \d\x, \qquad \forall K\in\Tt. 
\ee
Notice that $0 \le s_K^0 \le 1$ since $0 \le s_0 \le 1$. 
We define $\btau^0=\left(\tau_K^0\right)_K \in\X_{\Tt, \rm int}$ as 
\be\label{eq:btau0}
\btau^0 = s^{-1}(\bs^0), \ee
so that 
$$
s(\tau_K^0) = s_K^0, \qquad \forall K \in \Tt.
$$
The boundary condition $\tau_D \in C^1(\ov Q)$ is discretized by setting for all $n \in \{0,\dots, N\}$ 
\be\label{eq:tauDdisc}
\tau_{D,\sig}^n= \tau_D(\x_\sig, t^n) , \quad \forall \sig \in \Ee_{\rm ext}, \quad \text{and}\quad
\tau_{D,K}^n= \tau_D(\x_K, t^n) , \quad \forall K \in \Tt.
\ee
Then we denote by $\btau_D^n  = \left(\tau_{D,K}^n, \tau_{D,\sig}^n\right)_{K\in \Tt, \sig \in \Ee_{\rm ext}} \in \X_\Tt$, and by 
$\btau_D = \left(\btau_D^n\right)_{1\le n\le N} \in \X_\Dd$. 
It follows from Formula~\eqref{eq:prod-scal} and from the regularity of $\tau_D$ that 
\begin{multline}\label{eq:tauDdisc-reg}
\int_\O |\grad_\Tt \btau_D^n|^2 \d\x = \sum_{\sig =K|L\in \Ee_{\rm int}} A_\sig (\tau_{D,K}^n - \tau_{D,L}^n)^2 + 
\sum_{\sig \in \Ee_{\rm ext}} A_\sig  (\tau_{D,K}^n - \tau_{D,\sig}^n)^2 \\
\le \|\grad \tau_D\|_\infty^2 \left(  \sum_{\sig =K|L\in \Ee_{\rm int}} m_\sig d_{\sig} + 
\sum_{\sig \in \Ee_{\rm ext}}  m_\sig d_{K,\sig}\right) = d m_\O \|\grad \tau_D\|_\infty^2.
\end{multline}

Let $n\ge1$. 
Assume that the state $\btau^{n-1}=\left(\tau_K^{n-1}\right)_K \in \X_\Tt$ is known.
The implicit finite volume scheme is obtained by writing the local conservation of 
the volume of each fluid on the control volumes, i.e., 
\be\label{eq:scheme}
\frac{s(\tau_K^n) - s(\tau_K^{n-1})}{\dt^n} m_K 
+ \sum_{\sig \in \Ee_K} F_{K,\sig}^n = 0, \qquad \forall K \in \Tt, 
\ee
where $F_{K,\sig}^n$ denotes the outward w.r.t. $K$ flux across the edge $\sig$ at time step $t^n$. 
Denote by $\n_{K,\sig}$ the outward w.r.t. $K$ normal to $\sig$, and by 
$g_{K,\sig} = \g\cdot\n_{K,\sig}$ for all $\sig \in \Ee_K$ and all $K\in \Tt$.
Denote by 
$$
\tau_{K,\sig}^n = \begin{cases}
\tau_L^n & \text{if}\; \sig = K|L \in \Ee_{\rm int}, \\
\tau_{D,\sig}^n & \text{if}\; \sig \in \Ee_{K,\rm ext},  
\end{cases}
$$
then
the fluxes $F_{K,\sig}^n$ across $\sig \in \Ee_{K}$ is defined by 
\be\label{eq:FKsig}
F_{K,\sig}^n = m_\sig \left( \lambda(s(\tau_K^n)) g_{K,\sig}^+ - \lambda(s(\tau_{K,\sig}^n)) g_{K,\sig}^-\right)
+ A_\sig \left( u(\tau_K^n) - u(\tau_{K,\sig}^n)\right).
\ee
Note in particular that the scheme is locally conservative, i.e., 
$$
F_{K,\sig}^n + F_{L,\sig}^n = 0, \qquad \forall \sig = K|L \in \Ee_{\rm int}.
$$
Combining~\eqref{eq:scheme}--\eqref{eq:FKsig}, the scheme can be rewritten in a condensed form as 
\be\label{eq:scheme2}
\Rr_K\left(\tau_K^n, \tau_K^{n-1}, \left(\tau_L^n\right)_{L\neq K}, \left(\tau_{D,\sig}^n\right)_{\sig \in \Ee_{K,\rm ext}}\right)=0, \qquad \forall K \in \Tt, 
\ee
where $\Rr_K$ is nondecreasing w.r.t. its first argument and nonincreasing w.r.t. the others 
thanks to the monotonicity of the functions $\lambda, s$ and $u$. 

It is worth noticing that
\be\label{eq:div-nulle}
\sum_{\sig \in \Ee_K} m_\sig g_{K,\sig} = 0, \qquad \forall K \in \Tt. 
\ee
Therefore, the convective flux balance can be reformulated, yielding
\begin{align}\label{eq:balance-flux}
\sum_{\sig \in \Ee_K} F_{K,\sig}^n = & 
\sum_{\sig \in \Ee_KL} m_\sig g_{K,\sig}^- \left(\lambda(s(\tau_K^n)) - \lambda(s(\tau_{K,\sig}^n))\right) \\
&		+ \sum_{\sig = K|L}  A_\sig (u(\tau_K^n) - u(\tau_{K,\sig}^n)) 
		, \qquad \forall K \in \Tt. \nonumber
\end{align}

\subsection{Existence and uniqueness of the solution to the scheme}\label{ssec:anal-scheme}

In this section, we analyze the system~\eqref{eq:scheme2} obtained for a fixed admissible discretization 
$\Dd$ of $Q$.
In what follows, we denote by 
$$
a\top b = \max(a,b) \quad \text{and}\quad a\bot b= \min(a,b), \qquad \forall (a,b) \in \R^2.
$$

The following Lemma is a discrete counterpart of the $L^1$-contraction principle~\eqref{eq:compL1} 
on the exact solution. 
\begin{lem}\label{lem:contract}
Let $\btau^{n-1}$ and $\wh \btau^{n-1}$ be two elements of $\X_{\Tt, \rm int}$, and let 
$\btau^{n}$ and $\wh \btau^{n}$ in $\X_{\Tt, \rm int}$ be two corresponding solutions, then 
\be\label{eq:contract-disc}
\int_\O \left| \pi_\Tt s(\btau^n) - \pi_\Tt s(\wh \btau^n) \right| \d\x \le \int_\O \left| \pi_\Tt s(\btau^{n-1}) - \pi_\Tt s(\wh \btau^{n-1}) \right| \d\x.
\ee
\end{lem}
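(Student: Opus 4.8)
The plan is to reproduce at the discrete level the proof of the continuous contraction estimate \eqref{eq:compL1}, exploiting the monotonicity of $\Rr_K$ recorded just after \eqref{eq:scheme2} together with the local conservativity $F_{K,\sig}^n + F_{L,\sig}^n = 0$. Since $\pi_\Tt$ reconstructs piecewise constant functions, \eqref{eq:contract-disc} is equivalent to the two estimates
$$
\sum_{K\in\Tt} m_K \big(s(\tau_K^n) - s(\wh\tau_K^n)\big)^\pm \le \sum_{K\in\Tt} m_K \big(s(\tau_K^{n-1}) - s(\wh\tau_K^{n-1})\big)^\pm,
$$
whose sum yields the claim via $|a| = a^+ + a^-$. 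I would first subtract the two instances of the scheme \eqref{eq:scheme}, obtaining for each $K\in\Tt$
$$
m_K\big(s(\tau_K^n) - s(\wh\tau_K^n)\big) + \dt^n\sum_{\sig\in\Ee_K}\big(F_{K,\sig}^n - \wh F_{K,\sig}^n\big) = m_K\big(s(\tau_K^{n-1}) - s(\wh\tau_K^{n-1})\big),
$$
then multiply by $\sign_+(\tau_K^n - \wh\tau_K^n)$ (with the convention $\sign_+(0)=0$) and sum over $K\in\Tt$.

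The heart of the argument is to show that the resulting flux contribution $\sum_{K}\sign_+(\tau_K^n - \wh\tau_K^n)\sum_{\sig\in\Ee_K}(F_{K,\sig}^n - \wh F_{K,\sig}^n)$ is nonnegative. To this end I would write each interior flux as $F_{K,\sig}^n = \Phi_\sig(\tau_K^n, \tau_L^n)$, where $\Phi_\sig$ is nondecreasing in its first argument and nonincreasing in its second, this being exactly the monotonicity of $\Rr_K$, and use the antisymmetry of the fluxes to regroup the edge sum cell-by-cell into a sum over edges. Each interior edge $\sig = K|L$ then contributes
$$
\big(\sign_+(\tau_K^n - \wh\tau_K^n) - \sign_+(\tau_L^n - \wh\tau_L^n)\big)\big(\Phi_\sig(\tau_K^n,\tau_L^n) - \Phi_\sig(\wh\tau_K^n,\wh\tau_L^n)\big),
$$
whose nonnegativity follows from a short check on the four sign configurations: when the two $\sign_+$ factors coincide the prefactor vanishes, and when they differ the monotonicity of $\Phi_\sig$ in each slot forces the flux difference to carry precisely the matching sign. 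Boundary edges $\sig\in\Ee_{K,\rm ext}$ share the common frozen datum $\tau_{D,\sig}^n$, so their contribution $\sign_+(\tau_K^n - \wh\tau_K^n)\big(\Phi_\sig(\tau_K^n,\tau_{D,\sig}^n) - \Phi_\sig(\wh\tau_K^n,\tau_{D,\sig}^n)\big)$ is nonnegative by monotonicity in the first slot alone.

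Discarding this nonnegative flux term, and using on the left the identity $\sign_+(\tau_K^n - \wh\tau_K^n)\big(s(\tau_K^n) - s(\wh\tau_K^n)\big) = \big(s(\tau_K^n) - s(\wh\tau_K^n)\big)^+$ (valid since $s$ is nondecreasing) together with the bound $\sign_+(\tau_K^n - \wh\tau_K^n)\big(s(\tau_K^{n-1}) - s(\wh\tau_K^{n-1})\big) \le \big(s(\tau_K^{n-1}) - s(\wh\tau_K^{n-1})\big)^+$ on the right, I obtain the ``$+$'' estimate. Running the same computation with $\sign_-(\tau_K^n - \wh\tau_K^n)$ gives the ``$-$'' estimate, and adding the two delivers \eqref{eq:contract-disc}. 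I expect the sign analysis of the regrouped interior flux term to be the only delicate point; the remainder is the standard bookkeeping of monotone finite volume schemes, and it is reassuring that the boundary data, being identical for $\btau^n$ and $\wh\btau^n$, produce no extra terms.
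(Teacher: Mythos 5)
Your proof is correct, but it takes a genuinely different route from the paper's. You run the classical Crandall--Tartar/Kato-inequality argument: subtract the two schemes, test with $\sign_+(\tau_K^n-\wh\tau_K^n)$, regroup the flux sum over edges using local conservativity, and check the sign configurations edge by edge using the monotonicity of the numerical flux in each of its two slots, the boundary edges being harmless because the Dirichlet data are common to both solutions; all of these steps are sound. The paper instead exploits the lattice structure: from the monotonicity of $\Rr_K$ in all of its arguments it deduces that the cell-wise maximum $\left(\tau_K^n\top\wh\tau_K^n\right)_{K}$ is a discrete subsolution (residual $\le 0$ with previous state $\btau^{n-1}\top\wh\btau^{n-1}$) and the cell-wise minimum a supersolution, then sums the difference of the two inequalities over $K\in\Tt$; the interior fluxes, being evaluated at a single consistent state, cancel identically by conservativity, and the identity $s(a\top b)-s(a\bot b)=|s(a)-s(b)|$ gives \eqref{eq:contract-disc} with no case analysis at all. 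The two mechanisms rest on exactly the same structural facts (flux monotonicity plus conservativity), so neither needs hypotheses the other does not, but they buy slightly different things: the paper's max/min trick is shorter and avoids your four-case sign check, whereas your computation delivers the one-sided estimates on $\left(s(\tau_K^n)-s(\wh\tau_K^n)\right)^\pm$ separately, which is the exact discrete counterpart of \eqref{eq:compL1} and marginally stronger than the stated absolute-value inequality.
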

\begin{proof}
It follows from the monotonicity of $\Rr_K$ that 
\begin{eqnarray*}
\Rr_K\left(\tau_K^n, \tau_K^{n-1}\top\wh\tau_K^{n-1}, \left(\tau_L^n \top\wh\tau_L^n\right)_{L\neq K}, \left(\tau_{D,\sig}\right)_{\sig \in \Ee_{K,\rm ext}}\right) \le 0, \\
\Rr_K\left(\wh\tau_K^n, \tau_K^{n-1}\top\wh\tau_K^{n-1}, \left(\tau_L^n \top\wh\tau_L^n\right)_{L\neq K} , \left(\tau_{D,\sig}\right)_{\sig \in \Ee_{K,\rm ext}}\right) \le 0.
\end{eqnarray*}
Since $\tau_K^n\top \wh\tau_K^n$ is either equal to $\tau_K^n$ or to $\wh\tau_K^n$, one has 
\be\label{eq:comp-top}
\Rr_K\left(\tau_K^n\top\wh\tau_K^n, \tau_K^{n-1}\top\wh\tau_K^{n-1}, 
				\left(\tau_L^n \top\wh\tau_L^n\right)_{L\neq K}, \left(\tau_{D,\sig}\right)_{\sig \in \Ee_{K,\rm ext}}\right) \le 0.
\ee
Similar calculations lead to 
\be\label{eq:comp-bot}
\Rr_K\left(\tau_K^n\bot\wh\tau_K^n, \tau_K^{n-1}\bot\wh\tau_K^{n-1}, 
				\left(\tau_L^n \bot\wh\tau_L^n\right)_{L\neq K}, \left(\tau_{D,\sig}\right)_{\sig \in \Ee_{K,\rm ext}}\right) \ge 0.
\ee
Summing~\eqref{eq:comp-top} with~\eqref{eq:comp-bot} and over $K\in\Tt$ yields~\eqref{eq:contract-disc}.
\end{proof}

\begin{lem}\label{lem:unique}
Given $\btau^{n-1}\in \X_\Tt$, then there exists at most one solution $\btau^n\in \X_\Tt$ to the 
scheme~\eqref{eq:scheme}--\eqref{eq:FKsig}. 
\end{lem}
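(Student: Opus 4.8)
The plan is to deduce uniqueness directly from the $L^1$-contraction estimate of Lemma~\ref{lem:contract}, which already contains almost all the needed work. The key observation is that uniqueness of $\btau^n$ follows from uniqueness of $s(\btau^n)$ together with the strict monotonicity encoded in the non-degeneracy assumption~\eqref{eq:param-nondeg}.

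\textbf{Step 1: collapse to the saturation variable.} Suppose $\btau^n$ and $\wh\btau^n$ are two solutions of~\eqref{eq:scheme}--\eqref{eq:FKsig} associated with the \emph{same} previous state $\btau^{n-1}$. Apply Lemma~\ref{lem:contract} with $\wh\btau^{n-1} = \btau^{n-1}$. Since the right-hand side of~\eqref{eq:contract-disc} vanishes, one obtains
$$
\int_\O \left| \pi_\Tt s(\btau^n) - \pi_\Tt s(\wh\btau^n) \right| \d\x \le 0,
$$
which forces $s(\tau_K^n) = s(\wh\tau_K^n)$ for every $K \in \Tt$.

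\textbf{Step 2: recover the $\tau$-unknowns from the saturations and the scheme.} Knowing that the saturations coincide, I would return to the flux balance~\eqref{eq:balance-flux}. Because $s(\tau_K^n) = s(\wh\tau_K^n)$ for all $K$, the accumulation terms in~\eqref{eq:scheme} agree, and the convective contributions $m_\sig g_{K,\sig}^- \lambda(s(\tau_{K,\sig}^n))$ also agree cellwise. Subtracting the two discrete equations therefore leaves only the diffusive part, so that for every $K \in \Tt$,
$$
\sum_{\sig \in \Ee_K} A_\sig \left( \big(u(\tau_K^n) - u(\wh\tau_K^n)\big) - \big(u(\tau_{K,\sig}^n) - u(\wh\tau_{K,\sig}^n)\big) \right) = 0,
$$
with the boundary values fixed ($u(\tau_{D,\sig}^n)$ identical for both). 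Setting $w_K = u(\tau_K^n) - u(\wh\tau_K^n)$, this says $\bw$ solves a homogeneous discrete Laplace problem with zero boundary data; testing against $\bw$ itself and using the symmetric positive form~\eqref{eq:prod-scal} gives $\grad_\Tt \bw = \0$, hence $w_K = 0$ for all $K$. Thus $u(\tau_K^n) = u(\wh\tau_K^n)$ for every $K$.

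\textbf{Step 3: conclude via strict monotonicity.} At each cell we now have both $s(\tau_K^n) = s(\wh\tau_K^n)$ and $u(\tau_K^n) = u(\wh\tau_K^n)$. The non-degeneracy condition $s'(\tau) + u'(\tau) > 0$ a.e. means that the map $\tau \mapsto s(\tau) + u(\tau)$ is strictly increasing, hence injective; since $s+u$ takes equal values at $\tau_K^n$ and $\wh\tau_K^n$, we conclude $\tau_K^n = \wh\tau_K^n$ for all $K$, which is the claim.

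\textbf{Anticipated obstacle.} The main subtlety is Step 2: one must justify that subtracting the two scheme equations genuinely isolates a coercive discrete diffusion operator. This relies on the convective fluxes depending on $\tau$ only through $s(\tau)$ (true by~\eqref{eq:FKsig}) so that they cancel once saturations agree, and on the boundary data being common to both solutions. An alternative, perhaps cleaner, route avoiding the discrete-Laplacian argument is to rerun the monotonicity computation of Lemma~\ref{lem:contract} on the \emph{variable} $u(\tau)$ — or to argue directly from~\eqref{eq:comp-top}--\eqref{eq:comp-bot} that $\btau^n \top \wh\btau^n$ and $\btau^n \bot \wh\btau^n$ are both solutions and compare them — but the energy argument above is the most transparent.
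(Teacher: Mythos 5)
Your proof is correct and follows essentially the same route as the paper's: the contraction estimate of Lemma~\ref{lem:contract} gives equality of the saturations, subtracting the two systems isolates a homogeneous discrete Laplace problem for $w_K = u(\tau_K^n)-u(\wh\tau_K^n)$ whence $\bw = \0$, and the non-degeneracy condition~\eqref{eq:param-nondeg} then forces $\btau^n = \wh\btau^n$. Your Steps 2 and 3 merely spell out what the paper compresses into ``classical arguments'' (the coercivity of the form~\eqref{eq:prod-scal} with zero boundary data) and the strict monotonicity of $s+u$, which is a welcome addition but not a different proof.
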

\begin{proof}
As a direct consequence of Lemma~\ref{lem:contract}, $s(\tau_K^n) = s(\wh\tau_K^n)$ for all $K\in\Tt$.
Subtracting the system yielding $\wh\btau^{n}$ to the one corresponding to $\btau^{n}$ leads to 
$$
\sum_{\sig = K|L \in \Ee_K} A_\sig\left(w_K^n - w_L^n\right) + \sum_{\sig\in\Ee_{\rm ext}\cap \Ee_K} A_\sig w_K^n = 0, 
\qquad \forall K\in\Tt,
$$
where we have set $w_K^n = u(\tau_K^n) - u(\wh\tau_K^n)$. It follows from classical 
arguments that $\bw^n=\left(w_K^n\right)_K = \0_{\X_\Tt}$. 
Bearing the nondegeneracy condition~\eqref{eq:param-nondeg} in mind, we get that $\btau^n = \wh\btau^n$.
\end{proof}

The following lemma shows that the solution $\btau^n$ to the scheme is always greater than $\tau^\star$. 
Therefore, the extension~\eqref{eq:u-extended} we chose for the function $u$ does not affect the result.

\begin{lem}\label{lem:pos}
Let $\btau^n\in\X_\Tt$ be the solution to~\eqref{eq:scheme}--\eqref{eq:FKsig}, then $\tau_K^n \ge \tau_\star$ for all 
$K \in \Tt$.
\end{lem}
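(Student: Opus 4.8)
The plan is to establish a discrete minimum principle by contradiction, exploiting the monotonicity structure already recorded in~\eqref{eq:scheme2} and~\eqref{eq:balance-flux}, together with the strict increase of $u$ below $\tau_\star$ coming from the extension~\eqref{eq:u-extended}. Since $\Tt$ is finite, the minimum $m:=\min_{K\in\Tt}\tau_K^n$ is attained, say at some cell $K_0$. Assume for contradiction that $m<\tau_\star$. I will use repeatedly that $\tau_{K_0,\sig}^n\ge\tau_\star>m$ for every boundary edge $\sig\in\Ee_{K_0,\rm ext}$ (because $\tau_D\ge\tau_\star$ by~\eqref{eq:Dirichlet-tau} and~\eqref{Dirichlet-tau-reg}), that $\tau_{K_0,\sig}^n=\tau_L^n\ge m$ for every interior edge $\sig=K_0|L$, and that $s(\sigma)=0$ and hence $\lambda(s(\sigma))=0$ for every $\sigma\le\tau_\star$ (as $s$ is nondecreasing, tends to $0$ at $\tau_\star$, and is extended by a constant below $\tau_\star$).

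Next I would evaluate the scheme~\eqref{eq:scheme} at $K_0$, using the rewriting~\eqref{eq:balance-flux} of the flux balance, and check that each of the three contributions is nonpositive. The accumulation term equals $\frac{s(m)-s(\tau_{K_0}^{n-1})}{\dt^n}m_{K_0}\le 0$, since $s(m)=0\le s(\tau_{K_0}^{n-1})$. The convective term $\sum_{\sig\in\Ee_{K_0}}m_\sig g_{K_0,\sig}^-\big(\lambda(s(m))-\lambda(s(\tau_{K_0,\sig}^n))\big)$ is nonpositive because $\lambda(s(m))=0\le\lambda(s(\tau_{K_0,\sig}^n))$ while $g_{K_0,\sig}^-\ge 0$. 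The diffusive term $\sum_{\sig\in\Ee_{K_0}}A_\sig\big(u(m)-u(\tau_{K_0,\sig}^n)\big)$ is nonpositive since $m\le\tau_{K_0,\sig}^n$ and $u$ is nondecreasing. As the total sum vanishes, each nonnegative/nonpositive block must be zero; in particular the diffusive block forces
\[
u(m)=u(\tau_{K_0,\sig}^n)\qquad\text{for every }\sig\in\Ee_{K_0}.
\]

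The crux is then to convert this equality into geometric propagation, and this is where the strict monotonicity of $u$ on $(-\infty,\tau_\star]$ is essential. By~\eqref{eq:u-extended} the extended $u$ has slope $1$ on $(-\infty,\tau_\star)$, so $u(m)=m-\tau_\star+u_\star<u_\star=u(\tau_\star)$; moreover $u(\sigma)\ge u_\star$ for all $\sigma\ge\tau_\star$ by monotonicity of $u$. Consequently, for a boundary edge one would get $u(\tau_{K_0,\sig}^n)\ge u_\star>u(m)$, contradicting the equality; hence $K_0$ has no boundary edge. For an interior edge $\sig=K_0|L$, the equality $u(\tau_L^n)=u(m)<u_\star$ first forces $\tau_L^n<\tau_\star$, and then, since $u$ is strictly increasing on $(-\infty,\tau_\star)$, it forces $\tau_L^n=m$. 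Thus every cell attaining the minimum has no boundary edge and all of its neighbours also attain the minimum. The argument applies verbatim to any minimizing cell, so the set $\{K\in\Tt:\tau_K^n=m\}$ is nonempty, closed under taking neighbours, and contains no cell adjacent to $\partial\O$; by connectedness of $\O$ (argued component by component if needed) this set would be all of $\Tt$ while meeting no boundary edge, which is impossible since $\Ee_{\rm ext}\neq\emptyset$. This contradiction yields $m\ge\tau_\star$, i.e. $\tau_K^n\ge\tau_\star$ for all $K\in\Tt$. I expect the only delicate point to be this last propagation step, and precisely the verification that the extension~\eqref{eq:u-extended} makes $u$ \emph{strictly} increasing below $\tau_\star$, which is exactly what turns the equality of the $u$-values into a genuine spreading of the minimum; note also that no induction on $n$ is needed, as only $s\ge 0$ is used in the accumulation term.
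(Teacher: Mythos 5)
Your proof is correct, and its core mechanism is the same as the paper's: evaluate the scheme \eqref{eq:scheme}--\eqref{eq:FKsig} at a cell realizing the minimum, use $s(\tau)=0$ and $\lambda(s(\tau))=0$ for $\tau\le\tau_\star$ to make the accumulation and convective contributions nonpositive, and then exploit the strict monotonicity of the extension \eqref{eq:u-extended} to derive a contradiction from the diffusive block. The difference is that the paper stops one step earlier than you do: having obtained $\sum_{\sig\in\Ee_K}A_\sig\left(u(\tau_K^n)-u(\tau_{K,\sig}^n)\right)\ge 0$, it asserts that this quantity is in fact negative because $u(\tau_K^n)<u(\tau_\star)\le u(\tau_{D,\sig}^n)$ --- an inequality that produces a strictly negative term only when the minimizing cell possesses a boundary edge $\sig\in\Ee_{K,\rm ext}$. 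For a minimizing cell interior to the mesh, all the terms can vanish simultaneously, and the paper's one-line conclusion does not apply verbatim. Your propagation step handles exactly this case: the vanishing of each diffusive term, combined with the strict increase of $u$ on $(-\infty,\tau_\star)$ and the bound $u\ge u_\star$ on $[\tau_\star,\infty)$, forces every neighbour of a minimizing cell to be minimizing and forbids minimizing cells from touching $\p\O$; connectedness of the mesh then spreads the minimizing set to the whole of $\Tt$, contradicting $\Ee_{\rm ext}\neq\emptyset$. So your write-up is not only correct but fills in a step the paper's terse proof glosses over; the only ingredient you use beyond the paper's is the (standard, but worth stating) connectedness of the cell-adjacency graph of an admissible mesh of a connected $\O$.
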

\begin{proof}
There is nothing to prove if $\tau_\star = -\infty$, hence let us assume that $\tau_\star$ is finite. 
Let $K$ be a cell such that $\tau_K^n \le \tau_{K,\sig}^n$ for all $\sig \in \Ee_K$, and assume that $\tau_K^n <\tau_\star$.
Since $s(\tau_K^n)=0$ and $\lambda(s(\tau_K^n))=0$, one gets that 
$$
\sum_{\sig \in \Ee_{K}} A_\sig\left(u(\tau_K^n) -u(\tau_{K,\sig}^n)\right) \ge  0.
$$
The extension~\eqref{eq:u-extended} of $u$ ensures that $u(\tau_K^n)<u(\tau_\star) \le u(\tau_{D,\sig}^n)$. The left-hand side 
of the above relation is therefore negative, hence a contradiction with the assumption $\tau_K^n <\tau_\star$.
\end{proof}

Let $n \in \{0,\dots, N\}$, then define 
$$
e_K^n(\tau) = \int_{\tau_{D,K}^n}^\tau (a-\tau_{D,K}^n) s'(a)\d a =\int_{\tau_{D,K}^n}^\tau \big(s(\tau) - s(a)\big) \d a \ge0, \quad \forall \tau \in \R, 
$$
and 
$\Eee^n :\X_\Tt \to \R_+$ by 
$$
\Eee^n(\btau) = \sum_{K\in\Tt} e_K^n (\tau_K) m_K, \qquad \forall \btau = \left(\tau_K\right)_{K\in\Tt}.
$$
It is easy to verify (see e.g.~\cite{CP12}) that 
\be\label{eq:borne-Eee}
0 \le \Eee^n(\btau) \le m_\O \left(\|1-s\|_{L^1(\R_+)} + \|s\|_{L^1(\R_-)}\right), \qquad \forall \btau \in \X_\Tt. 
\ee
Moreover, it follows from the $C^1$ regularity of $\tau_D$ and from the fact that $0 \le s \le 1$ that 
\be\label{eq:diffEee}
|\Eee^n(\btau) - \Eee^{n-1}(\btau)| \le \dt^n m_\O \|\p_t \tau_D\|_{\infty}.
\ee

We define the Lipschitz continuous function $\xi:\R_+\to \R$ by 
\be\label{eq:xi}
\xi(\tau) = \int_0^\tau \sqrt{\lambda(s(a)) p'(a)} \d a = \int_0^\tau \sqrt{u'(a)} \d a, \qquad \forall \tau \in \R, 
\ee
then it follows from Cauchy-Schwartz inequality that 
\be\label{eq:xi2}
(a-b)(u(a) - u(b)) \ge (\xi(a) - \xi(b))^2, \qquad \forall (a,b) \in \R^2.
\ee
Moreover, the Lipschitz continuity of $\xi$ implies that 
$$
u(\tau) \le \|\xi'\|_\infty \xi(\tau), \qquad \forall \tau \in \R_+, 
$$
hence it follows from Assumption~\eqref{eq:u-coercive} that 
\be\label{eq:xi-coercive}
\tau \le C \left(\xi(\tau) + 1\right), \qquad \forall \tau \in \R_+, 
\ee
then, in particular, one has 
\be\label{eq:xi-coercive2}
\lim_{\tau\to\infty} \xi(\tau) = +\infty.
\ee

\begin{lem}\label{lem:wBV}
Let $\btau^n$ be a solution to the scheme~\eqref{eq:scheme}--\eqref{eq:FKsig}. 
Then there exists $C_1$ depending only on $\O$ and $\tau_D$ such that the following estimate holds:
\be\label{eq:energy}
\Eee^n(\btau^n) + \dt^n \left( C_1 + 
\frac12\int_\O |\grad_\Tt \xi(\btau)|^2 \d\x
 \right)
\le \Eee^{n-1}(\btau^{n-1}).
\ee
\end{lem}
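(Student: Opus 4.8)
The plan is to test the scheme \eqref{eq:scheme}--\eqref{eq:FKsig} against the discrete Dirichlet-shifted unknown $\bw:=\btau^n-\btau_D^n$, whose boundary-edge components vanish: concretely, I would multiply the $K$-th equation by $(\tau_K^n-\tau_{D,K}^n)\,\dt^n$ and sum over $K\in\Tt$. Since both the diffusive part $A_\sig(u(\tau_K^n)-u(\tau_{K,\sig}^n))$ and the convective part $m_\sig(\lambda(s(\tau_K^n))g_{K,\sig}^+-\lambda(s(\tau_{K,\sig}^n))g_{K,\sig}^-)$ of $F_{K,\sig}^n$ are conservative across internal edges, a discrete integration by parts is legitimate and turns the tested identity into the sum of three contributions: an accumulation term, a diffusive term, and a convective (gravity) term. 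I would bound each from below, aiming at $\Eee^n(\btau^n)-\Eee^{n-1}(\btau^{n-1})$ for the first and at $\tfrac12\int_\O|\grad_\Tt\xi(\btau^n)|^2\d\x$ minus mesh-independent constants for the other two.

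For the accumulation term I would invoke the by-now-classical convexity inequality (see e.g.~\cite{CP12})
\be
\big(s(\tau_K^n)-s(\tau_K^{n-1})\big)(\tau_K^n-\tau_{D,K}^n)\ge e_K^n(\tau_K^n)-e_K^n(\tau_K^{n-1}),
\ee
which is immediate once one notices that $e_K^n(\tau_K^n)-e_K^n(\tau_K^{n-1})=\int_{\tau_K^{n-1}}^{\tau_K^n}(a-\tau_{D,K}^n)s'(a)\,\d a$, so that the difference of the two sides equals $\int_{\tau_K^{n-1}}^{\tau_K^n}(\tau_K^n-a)s'(a)\,\d a\ge 0$ by monotonicity of $s$. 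Multiplying by $m_K$ and summing gives a lower bound $\Eee^n(\btau^n)-\Eee^n(\btau^{n-1})$. The only subtlety is that the reference configuration is $\btau_D^n$ and not $\btau_D^{n-1}$; I would repair this using the equi-Lipschitz-in-time estimate \eqref{eq:diffEee}, at the cost of a term $\dt^n m_\O\|\p_t\tau_D\|_\infty$.

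For the diffusive term, the discrete integration by parts together with the Cauchy--Schwarz inequality \eqref{eq:xi2} produces the good quantity $\sum_\sig A_\sig(\xi(\tau_K^n)-\xi(\tau_{K,\sig}^n))^2=\int_\O|\grad_\Tt\xi(\btau^n)|^2\d\x$ through \eqref{eq:prod-scal}, with coefficient $1$. The cross terms carrying the differences of $\tau_D$ would be handled by the elementary bound $|u(a)-u(b)|\le\|\xi'\|_\infty|\xi(a)-\xi(b)|$, which follows from $u'=(\xi')^2$ and \eqref{eq:su-Lip}, followed by Young's inequality with weight $\tfrac12$. This consumes exactly half of the dissipation, leaving $\tfrac12\int_\O|\grad_\Tt\xi(\btau^n)|^2\d\x$, while the remainder is bounded by $\tfrac{\|\xi'\|_\infty^2}{2}\int_\O|\grad_\Tt\btau_D^n|^2\d\x\le\tfrac{\|\xi'\|_\infty^2}{2}\,d\,m_\O\|\grad\tau_D\|_\infty^2$ thanks to \eqref{eq:tauDdisc-reg}.

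The convective term is where I expect the main obstacle, and it forces a genuinely different idea. One \emph{cannot} absorb it into the dissipation, because $\lambda(s(\cdot))$ is not Lipschitz continuous with respect to $\xi$ near $\tau_\star$: there $\lambda(s)\to 0$ while $s'$ may stay bounded away from $0$, so controlling $|\lambda(s(\tau_K^n))-\lambda(s(\tau_{K,\sig}^n))|$ by $|\xi(\tau_K^n)-\xi(\tau_{K,\sig}^n)|$ is hopeless. Instead I would exploit the divergence structure carried by \eqref{eq:div-nulle}. Setting $B(\tau):=\int_0^\tau\lambda(s(a))\,\d a$, which is convex because $\lambda\circ s$ is nondecreasing, the upwind choice in \eqref{eq:FKsig} yields for every internal edge $\sig=K|L$ the pointwise bound
\be
m_\sig\big(\lambda(s(\tau_K^n))g_{K,\sig}^+-\lambda(s(\tau_L^n))g_{K,\sig}^-\big)(\tau_K^n-\tau_L^n)\ge m_\sig g_{K,\sig}\big(B(\tau_K^n)-B(\tau_L^n)\big),
\ee
and likewise on boundary edges with $\tau_{D,\sig}^n$ replacing $\tau_L^n$. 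Reorganizing the right-hand sides per cell, the interior contribution collapses to $\sum_K B(\tau_K^n)\sum_{\sig\in\Ee_K}m_\sig g_{K,\sig}$, which vanishes identically by \eqref{eq:div-nulle}; after this exact cancellation only boundary terms $-\sum_{\sig\in\Ee_{\rm ext}}m_\sig g_{K,\sig}B(\tau_{D,\sig}^n)$ remain, and these are bounded by $|\g|\,m_{\p\O}\max_{|\tau|\le\|\tau_D\|_{\infty}}|B(\tau)|$, a mesh-independent quantity. The leftover pieces produced by replacing $w_K-w_L$ with $\tau_K^n-\tau_L^n$ (i.e.\ the flux-weighted differences of $\tau_D$) are estimated crudely through $\|\lambda(s(\cdot))\|_\infty=\lambda(1)$ and $\sum_\sig m_\sig d_\sig|g_{K,\sig}|\le|\g|\,d\,m_\O$. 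Collecting the three lower bounds, the tested identity becomes $\Eee^n(\btau^n)-\Eee^{n-1}(\btau^{n-1})+\dt^n\big(\tfrac12\int_\O|\grad_\Tt\xi(\btau^n)|^2\d\x - C\big)\le 0$ for a constant $C$ built only from $\O$, $\tau_D$, $\lambda$ and $\g$; choosing $C_1=-C$ gives exactly \eqref{eq:energy}, with a constant independent of the discretization $\Dd$ and of $n$ (note that $C_1\le 0$, consistent with the stated inequality).
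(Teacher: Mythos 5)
Your proposal is correct, and for two of the three terms it is the paper's own argument: same test vector $\dt^n(\btau^n-\btau_D^n)$, same convexity inequality plus \eqref{eq:diffEee} for the accumulation term, and the same combination of \eqref{eq:xi2}, the bound $|u(a)-u(b)|\le\|\xi'\|_\infty|\xi(a)-\xi(b)|$, Young's inequality with weight $\tfrac12$ and \eqref{eq:tauDdisc-reg} for the diffusion term. The genuine divergence is the convection term, and there your route differs from the paper's in a meaningful way. The paper follows \cite{EGGH98}: it splits the gravity term into the parts tested against $\tau_K^n$ and against $-\tau_{D,K}^n$, and handles the former with the auxiliary function $\Phi(\tau)=\int_0^\tau a\,s'(a)\lambda'(s(a))\,\d a$ via the inequality $b\big(\lambda(s(b))-\lambda(s(a))\big)\ge\Phi(b)-\Phi(a)$; the price is that $\Phi$ must be \emph{globally} bounded, and proving this is precisely where the hypotheses $s\in L^1(\R_-)$, \eqref{eq:u_star} and \eqref{eq:liminf-p'} enter the paper's proof. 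You instead work with the entropy-flux primitive $B(\tau)=\int_0^\tau\lambda(s(a))\,\d a$ and the standard upwind inequality (upwind flux)$\times$(jump) $\ge m_\sig g_{K,\sig}\big(B(\tau_K^n)-B(\tau_{K,\sig}^n)\big)$ (which indeed holds for any monotone $\lambda\circ s$, in both sign cases of $g_{K,\sig}$), after which the interior contribution telescopes to zero exactly by \eqref{eq:div-nulle} and only $B(\tau_{D,\sig}^n)$ on boundary edges survives. Since $\Phi(\tau)=\tau\lambda(s(\tau))-B(\tau)$, the two tricks are close cousins, but yours only needs $B$ on the compact range of $\tau_D$, so it bypasses the boundedness-of-$\Phi$ lemma and the integrability assumptions behind it --- a mild but real economy of hypotheses at this point of the proof; both routes then dispose of the flux-weighted jumps of $\tau_D$ by the same crude estimate $\lambda(1)\,|\g|\,\|\grad\tau_D\|_\infty\sum_\sig m_\sig d_\sig\le \lambda(1)\,|\g|\,\|\grad\tau_D\|_\infty\, d\, m_\O$. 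One cosmetic caveat, which applies equally to the paper: your constant $C_1$ (like the paper's, through $\|\Phi\|_\infty$, $\|\lambda\|_\infty$, $\|u'\|_\infty$ and $|\g|$) depends on the nonlinearities and the gravity vector as well, so the statement ``depending only on $\O$ and $\tau_D$'' should be read as holding with the data $\lambda$, $s$, $u$, $\g$ regarded as fixed.
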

\begin{proof}
We multiply the equation~\eqref{eq:scheme} by $\dt^n (\tau_K^n-\tau_{D,K}^n)$ and sum over $K\in \Tt$. 
Using~\eqref{eq:balance-flux}, this provides 
\be\label{eq:T123}
T_1 +  \dt^n \big(T_2 +  T_3\big) = 0, 
\ee
where
\begin{align*}
T_1 = &\ds  \sum_{K\in\Tt} \left(s(\tau_K^n) - s(\tau_K^{n-1})\right) (\tau_K^n - \tau_{D,K}^n) m_K, \\
T_2 = &\ds   \sum_{K\in\Tt} \sum_{\sig\in\Ee_K} m_\sig g_{K,\sig}^- 
		\left( \lambda(s(\tau_K^n)) - \lambda(s(\tau_{K,\sig}^n)) \right) (\tau_K^n - \tau_{D,K}^n), \\
T_3 = & \int_\O \grad_\Tt u(\btau^n) \cdot \grad_\Tt (\btau^n - \btau_D^n) \d\x.
\end{align*}

It follows from the convexity of $e\circ s^{-1}$ (see e.g.~\cite[Proposition 3.7]{CG16_MCOM}) that 
$$T_1 \ge \Eee^n(\btau^n) - \Eee^n(\btau^{n-1}).
$$
Then thanks to~\eqref{eq:diffEee}, one gets that 
\be\label{eq:T1}
T_1 \ge \Eee^n(\btau^n) - \Eee^{n-1}(\btau^{n-1}) - \dt^n m_\O \|\p_t \tau_D\|_\infty.
\ee

The term $T_2$ can be estimated following the path of \cite{EGGH98}. 
Denote by $\Phi:\R\to\R_+$ the function defined by 
$$
\Phi(\tau) = \int_0^\tau a s'(a) \lambda'(s(a)) \d a, \qquad \forall \tau \in \R.
$$ 
Since $s(0) = 1$, one has $s'(a) = 0$ for all $a \ge 0$, and thus 
$$
\Phi(\tau) = 0 \quad \text{if}\quad \tau >0, \quad \text{and} \quad \Phi'(\tau) \le 0, \quad \forall \tau \le 0.
$$
Moreover, since $s \in L^1(\R_-)$, one has 
$$
|\tau| \lambda(s(\tau))  \le \|\lambda'\|_\infty |\tau|s(\tau) \underset{\tau \searrow \tau_\star}{\longrightarrow} 0.
$$
Therefore, for all $\tau \le 0$, one has 
$$
\Phi(\tau) = \tau \lambda(s(\tau)) + \int_\tau^0 \lambda(s(a)) \d a \le \int_\tau^0 \lambda(s(a)) \d a + C.
$$
Thanks to~\eqref{eq:u_star}  and to Assumption~\eqref{eq:liminf-p'}, one has 
$
\int_\tau^0 \lambda(s(a)) \d a \le C, 
$
hence $\Phi$ is bounded.
Simple calculations 
show that for all $(a,b) \in \R^2$, one has
$$
b \left( \lambda(s(b)) - \lambda(s(a)) \right) =  \Phi(b) - \Phi(a)  + \int_a^b \big(\lambda(s(r))-\lambda(s(a))\big)\d r
 \ge \Phi(b) - \Phi(a).
$$
Rewriting 
\be\label{eq:T21}
T_2 = T_{21} + T_{22} 
\ee
with 
\begin{align*}
T_{21} = &  \sum_{K\in\Tt} \sum_{\sig\in\Ee_K} m_\sig g_{K,\sig}^- 
		\left( \lambda(s(\tau_K^n)) - \lambda(s(\tau_{K,\sig}^n)) \right) \tau_K^n, \\
T_{22} = & -   \sum_{K\in\Tt} \sum_{\sig\in\Ee_K} m_\sig g_{K,\sig}^- 
		\left( \lambda(s(\tau_K^n)) - \lambda(s(\tau_{K,\sig}^n)) \right) \tau_{D,K}^n, 
\end{align*}
we get that 
\begin{align*}
T_{21} \ge&  \sum_{K\in\Tt} \sum_{\sig\in\Ee_K} m_\sig g_{K,\sig}^- \left(\Phi(s(\tau_K^n))  - \Phi(s(\tau_{D,K}^n))\right) \\
\ge & \sum_{K\in\Tt} \sum_{\sig\in\Ee_{K,\rm ext}} m_\sig \left(g_{K,\sig}^+ \Phi(s(\tau_K^n))  - g_{K,\sig}^- \Phi(s(\tau_{D,K}^n))\right).
\end{align*}
Using the boundedness of $\Phi$, we get that 
\be\label{eq:T21}
T_{21} \ge-  m_{\p\O} |\g| \|\Phi\|_\infty.
\ee
On the other hand, a classical reorganization of the term $T_{22}$ provides 
\begin{multline*}
T_{22}  = \sum_{\sig = K|L \in \Ee_{\rm int}} m_\sig \left(g_{K,\sig}^+ \lambda(s(\tau_K^n)) - g_{L,\sig}^+ \lambda(s(\tau_L^n))\right)
(\tau_{D,K}^n - \tau_{D,L}^n) \\
- \sum_{K\in \Tt} \sum_{\sig \in \Ee_{K,\rm ext}} m_\sig  \left(g_{K,\sig}^+ \lambda(s(\tau_K^n)) - g_{K,\sig}^- \lambda(s(\tau_{D,\sig}^n))\right) \tau_{D,K}^n.
\end{multline*}
Therefore, it follows from the regularity of $\tau_D$ and from the boundedness of $\lambda$ that 
\be\label{eq:T22}
T_{22} \ge - |\g| \|\lambda\|_\infty \left(d  m_\O \|\grad \tau_D\|_\infty - m_{\p\O} \|\tau_D\|_\infty\right).
\ee

Finally, it results from~\eqref{eq:xi2}, from the relation $u'= (\xi')^2$, and from~\eqref{eq:tauDdisc-reg} that
\begin{multline*}
T_3 \ge  \|\grad_\Tt \xi(\btau^n)\|^2_{L^2(\O)^d} - \|\grad_\Tt u(\btau^n)\|_{L^2(\O)^d}  \|\grad_\Tt \btau_D^n\|_{L^2(\O)^d} \\
\ge \|\grad_\Tt \xi(\btau^n)\|^2_{L^2(\O)^d} - \sqrt{\|u'\|_\infty} \|\grad_\Tt \xi(\btau^n)\|_{L^2(\O)^d}  \|\grad_\Tt \btau_D^n\|_{L^2(\O)^d}.
\end{multline*}
Therefore, it follows from~\eqref{eq:tauDdisc-reg} that 
\be\label{eq:T3}
T_3 \ge \frac1{2} \int_\O |\grad_\Tt \xi(\btau^n)|^2 \d\x  - \frac{\|u'\|_\infty d m_\O \|\grad \tau_D\|_\infty}2.
\ee
Putting~\eqref{eq:T1}--\eqref{eq:T3} in~\eqref{eq:T123} ends the proof of Lemma~\ref{lem:wBV}.
\end{proof}

\begin{prop}\label{prop:existence}
Let $\btau^{n-1} \in \X_{\Tt, \rm int}$,  there exists a unique solution $\btau^n \in \X_{\Tt,\rm int}$ 
to the scheme~\eqref{eq:scheme}--\eqref{eq:FKsig}. Moreover, it satisfies $\tau_K^n \ge \tau_\star$ for all $K  \in \Tt$.
\end{prop}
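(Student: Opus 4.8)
The plan is to dispose of the easy parts first and then concentrate on existence. \emph{Uniqueness} is nothing but Lemma~\ref{lem:unique}, while the lower bound $\tau_K^n \ge \tau_\star$ for any solution is exactly Lemma~\ref{lem:pos}. Hence the only genuine task is to prove that the finite-dimensional system~\eqref{eq:scheme}--\eqref{eq:FKsig} admits \emph{at least one} solution $\btau^n \in \X_{\Tt,\rm int} \simeq \R^{\#\Tt}$. Since the functions $\lambda$, $s$ and $u$ are continuous, the map $\Ff_n : \X_{\Tt,\rm int} \to \X_{\Tt,\rm int}$ whose $K$-th component is the left-hand side of~\eqref{eq:scheme} is continuous, and I would produce a zero of $\Ff_n$ through a topological degree (equivalently, Brouwer fixed-point) argument.

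The first ingredient is an \textbf{a priori estimate} locating every possible solution inside a fixed ball. Lemma~\ref{lem:wBV} is tailored for this: since $\Eee^n(\btau^n) \ge 0$ and since $\Eee^{n-1}(\btau^{n-1})$ is a fixed finite number bounded by~\eqref{eq:borne-Eee}, estimate~\eqref{eq:energy} yields a bound on $\|\grad_\Tt \xi(\btau^n)\|_{L^2(\O)^d}$ that is independent of the particular solution. On the \emph{fixed} mesh $\Tt$, once the Dirichlet values $\xi(\tau_{D,\sig}^n)$ (bounded because $\tau_D \in C^1(\ov Q)$) are frozen, the discrete seminorm in~\eqref{eq:prod-scal} becomes a genuine norm on the interior unknowns; a discrete Poincar\'e inequality therefore turns the gradient bound into a bound on each $|\xi(\tau_K^n)|$. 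Properness of $\xi$ then converts this into a bound on $\btau^n$: the upper bound follows from $\lim_{\tau\to\infty}\xi(\tau)=+\infty$, see~\eqref{eq:xi-coercive2}, and the lower bound from Lemma~\ref{lem:pos}. Consequently there is $R>0$, depending only on $\Tt$, $\tau_D$ and $\btau^{n-1}$, such that every solution lies in $B_R \subset \X_{\Tt,\rm int}$.

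With the a priori bound in hand I would close the argument by homotopy. I would introduce a continuous family $H:[0,1]\times\X_{\Tt,\rm int}\to\X_{\Tt,\rm int}$ joining $\Ff_n=H(1,\cdot)$ to a simple map $H(0,\cdot)$, for instance the diagonal map obtained by switching off the convective fluxes and the off-diagonal diffusive couplings, whose $K$-th component is a strictly increasing function of $\tau_K$ alone by~\eqref{eq:param-nondeg} (recall $s'+u'>0$). Such a diagonal monotone map has a unique, explicit zero and degree $\pm1$ on $B_R$. The homotopy must be designed so that the monotonicity exploited in Lemmas~\ref{lem:pos} and~\ref{lem:wBV} is preserved for every $\theta\in[0,1]$, ensuring that the estimates above apply \emph{uniformly} and that no zero of $H(\theta,\cdot)$ lies on $\partial B_R$. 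Homotopy invariance of the degree then yields $\deg(\Ff_n,B_R,0)=\deg(H(0,\cdot),B_R,0)\neq0$, whence a solution $\btau^n$ exists.

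The step I expect to be delicate is guaranteeing that the \emph{uniform} a priori bound survives along the whole homotopy, and in particular the \textbf{lower bound in the degenerate case} $\tau_\star=-\infty$. There Lemma~\ref{lem:pos} is vacuous and $\xi$ need not be coercive at $-\infty$, so the energy estimate alone does not prevent a component $\tau_K^n$ from drifting to $-\infty$. To rule this out I would revisit the discrete-maximum-principle argument of Lemma~\ref{lem:pos}: evaluating the scheme at a cell realizing the minimal value and using that the accumulation term $s(\tau_K^n)-s(\tau_K^{n-1})$ and the diffusive balance have controlled signs forces the minimal value to stay bounded below, uniformly in $\theta$. Once this is secured, the remainder of the proof is the routine degree bookkeeping sketched above.
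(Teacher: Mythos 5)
Your overall architecture is exactly the paper's: uniqueness delegated to Lemma~\ref{lem:unique}, the bound $\tau_K^n\ge\tau_\star$ delegated to Lemma~\ref{lem:pos}, an a priori bound on every possible solution obtained from the energy estimate~\eqref{eq:energy} of Lemma~\ref{lem:wBV} combined with the coercivity of $\xi$, and existence concluded by a topological degree argument (the paper cites \cite{LS34,Dei85,EGGH98} and leaves both the homotopy construction and the discrete Poincar\'e step implicit, which you spell out). So in spirit the proposal is the paper's proof.

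However, the patch you propose in your last paragraph for the case $\tau_\star=-\infty$ does not work as stated, and this is the one genuine defect. At a cell $K$ realizing the minimal value, the sign of the upwinded convective term and of the diffusive balance only give $s(\tau_K^n)\ge s(\tau_K^{n-1})$; since the non-degeneracy condition~\eqref{eq:param-nondeg} allows $s$ to be flat wherever $u'\ge\alpha_\star$, controlling $s(\tau_K^n)$ does \emph{not} control $\tau_K^n$, so the minimal value can still drift to $-\infty$ as far as this argument is concerned. The correct (and simpler) resolution, implicit in the paper's claim $|\tau_K^n|\le C$, is that $\xi$ is automatically coercive at $-\infty$ as well: since $s$ is nondecreasing with values in $[0,1]$, one has $\int_{-\infty}^0 s'(a)\,\d a\le 1$, so the set $\{a<0 : s'(a)\ge\alpha_\star\}$ has measure at most $1/\alpha_\star$; on its complement \eqref{eq:param-nondeg} forces $u'(a)\ge\alpha_\star$, whence
\begin{equation*}
|\xi(\tau)| = \int_\tau^0 \sqrt{u'(a)}\,\d a \;\ge\; \sqrt{\alpha_\star}\left(|\tau|-\frac1{\alpha_\star}\right),
\qquad \forall \tau\le 0,
\end{equation*}
and the same linear growth holds below a finite $\tau_\star$ thanks to the extension~\eqref{eq:u-extended}. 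Thus the gradient bound from Lemma~\ref{lem:wBV}, together with the frozen Dirichlet values, bounds $\tau_K^n$ from above \emph{and} below without any maximum principle, uniformly along your homotopy. Note that the paper only records the one-sided coercivity~\eqref{eq:xi-coercive2}, so you were right to flag the negative side as a gap to be filled; it is your proposed repair, not the identification of the issue, that needs replacing.
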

\begin{proof}
The uniqueness of the solution was proven at Lemma~\ref{lem:unique} while the fact that $\btau^n \ge \tau_\star$
was the purpose of Lemma~\ref{lem:pos}. Therefore, it only remains to show the existence 
of a solution. 
It follows from Estimate~\eqref{eq:energy} that 
$$
\| \grad_\Tt \left(\xi(\btau^n) - \xi(\btau_D^n)\right) \|_{L^2(\O)^d}^2 \le 2 \frac{\Eee^{n-1}(\btau^{n-1})}{\dt^n} + 2 C_1 + 4 \|u'\|_\infty \| \grad \tau_D\|_\infty^2.
$$
Since $\xi$ is coercive~\eqref{eq:xi-coercive2}, there exists $C$ depending on 
the data (among which $\btau^{n-1}$, $\tau_D$, the mesh $\Tt$ and the time step $\dt^n$) such that 
$$
|\tau_K^n| \le C, \qquad \forall K \in \Tt.
$$
This estimate allows us to make use of a topological degree argument (see~\cite{LS34,Dei85,EGGH98}) 
to prove the existence of one solution to the scheme. 
\end{proof}

The convergence of the scheme can be proved following the path proposed in~\cite{EGH99}.
Enhanced convergence properties can be obtained thanks to the recent contribution~\cite{DE16}.
But this is not the goal of this paper.  We are mainly interested in the practical computation of the 
approximate solution at fixed discretization parameters. In particular, we focus on the behavior the 
Newton's method.

\section{About the Newton method}\label{sec:Newton}

The numerical scheme~\eqref{eq:scheme}--\eqref{eq:FKsig} amounts for all $n \in \{1,\dots, N\}$ to 
the nonlinear system 
\be\label{eq:syst-n2}
\Ff_n(\btau^n) = \left(f_K^n(\btau^n)\right)_{K\in\Tt}= \0, \qquad \text{with}\quad \Ff_n \in \Cc^2\left(\R^{\#\Tt};\R^{\#\Tt}\right), 
\ee
where 
\begin{multline*}
f_K(\btau) = (s(\tau_K) - s_{K}^{n-1}) \\
+ \frac{\dt^n}{m_K} \sum_{\sig \in \Ee_K} \left( m_\sig g_{K,\sig}^- (\lambda(s(\tau_K^n)) - \lambda(s(\tau_{K,\sig}^n))) + A_\sig \left(u(\tau_K^n) - u(\tau_{K,\sig}^n)\right)\right).
\end{multline*}
Assume that the Jacobian matrix $\bbJ_{\Ff_n}(\btau^n)$ of $\Ff_n$ at $\btau^n$ 
is not singular (this will be shown for nondegenerate parametrizations~\eqref{eq:param-nondeg}, cf. Proposition~\ref{prop:Jac}).
Approximating the solutions to the system~\eqref{eq:syst-n} with the Newton method 
consists in the construction of a sequence $\left(\btau^{n,k}\right)_{k\ge 0}$ defined by 
\be\label{eq:Newton}
\begin{cases}
\btau^{n,0} = \btau^{n-1}, \\
\btau^{n,k+1} = \btau^{n,k} - \left[\bbJ_{\Ff_n}(\btau^{n,k})\right]^{-1} \Ff_n(\btau^{n,k}).
\end{cases}
\ee
If the method converges, 
the solution $\btau^n$ is then defined as 
\be\label{eq:btau-n}
\btau^n = \lim_{k\to\infty} \btau^{n,k}.
\ee
Moreover, the convergence speed is asymptotically quadratic if $\Ff_n \in \Cc^2$, 
i.e., 
\be\label{eq:quadra}
\|\btau^{n,k} - \btau^n\| \le C \|\btau^{n,k-1} - \btau^{n}\|^2, \qquad \forall k \ge k_\star\; \text{large enough}
\ee
where, denoting by $\Vv(\btau^n)$ a neighborhood of $\btau^n$ in $\X_{\Tt}$, the quantity $C$ 
(as well as $k_\star$) depends on 
\be\label{eq:cond-quadra-loc}
\sup_{\btau \in \Vv(\btau^n)} \left\|\left[\bbJ_{\Ff_n}(\btau)\right]^{-1}\right\|
\qquad \text{and}\qquad 
\sup_{\btau \in \Vv(\btau^n)} \left\| D^2\Ff_n(\btau)\right\|.
\ee
Since $\btau^n$ is unknown, a sufficient condition to ensure that~\eqref{eq:quadra} holds for 
some $C>0$ is the following uniform non-degeneracy condition
\be\label{eq:UNDC}
\sup_{\btau \in \R^{\#\Tt}} \left\|\left[\bbJ_{\Ff_n}(\btau)\right]^{-1}\right\| <\infty.
\ee

We cite here a simplified version of the so-called Newton-Kantorovich theorem (see, e.g.,~\cite{Kanto48,Ortega68,OR70}).
We refer to~\cite{GT74} for a quantitative version of the theorem, and to~\cite{QS93} to a non-smooth 
version.
\begin{thm}[Newton-Kantorovich theorem]\label{thm:Kanto}
Assume that there exists two positive quantities $C_2$ and $C_3$ such that 
\be\label{eq:Kanto-1}
\sup_{\btau \in \X_\Tt} \left\|\bbJ_{\Ff_n}(\btau)\right\| \le C_2 
\quad \text{and}\quad 
\sup_{\btau \in \X_\Tt} \left\|\left[\bbJ_{\Ff_n}(\btau)\right]^{-1}\right\| \le C_3, 
\ee
then there exists $\rho>0$ such that 
$$
\|\btau^{n,0} - \btau^n\| \le \rho \quad \implies \quad \btau^{n,k} \underset{k\to\infty}{\longrightarrow} \btau^n.
$$
\end{thm}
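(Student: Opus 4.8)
The plan is to prove the standard local quadratic convergence of Newton's method, exploiting that a root $\btau^n$ of $\Ff_n$ exists by Proposition~\ref{prop:existence} and that $\Ff_n \in \Cc^2$ by~\eqref{eq:syst-n2}. Observe first that, for the purely local statement claimed here, only the uniform inverse bound in~\eqref{eq:Kanto-1} is genuinely needed; the bound by $C_2$ belongs to the quantitative Kantorovich estimate and serves to make $\rho$ explicit. Since $D^2\Ff_n$ is continuous, I would fix a radius $\rho_0>0$ and set
\[
M = \sup_{\btau \in \ov{B(\btau^n,\rho_0)}} \|D^2\Ff_n(\btau)\| < \infty,
\]
the supremum being finite by compactness of the closed ball.

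The core step is a one-step error recursion. Writing $\boldsymbol{e}^k = \btau^{n,k}-\btau^n$ and combining the Newton update~\eqref{eq:Newton} with $\Ff_n(\btau^n) = \0$, I would rewrite
\[
\boldsymbol{e}^{k+1} = \left[\bbJ_{\Ff_n}(\btau^{n,k})\right]^{-1}\Big(\bbJ_{\Ff_n}(\btau^{n,k})\,\boldsymbol{e}^k - \Ff_n(\btau^{n,k})\Big).
\]
The parenthesized vector is precisely the first-order Taylor remainder of $\Ff_n$; using the integral representation
\[
\bbJ_{\Ff_n}(\btau^{n,k})\,\boldsymbol{e}^k - \Ff_n(\btau^{n,k}) = \int_0^1 \Big(\bbJ_{\Ff_n}(\btau^{n,k}) - \bbJ_{\Ff_n}(\btau^n + t\,\boldsymbol{e}^k)\Big)\boldsymbol{e}^k \,\d t
\]
and bounding the Jacobian increment by $M(1-t)\|\boldsymbol{e}^k\|$, the $t$-integration produces a factor $\tfrac12$, and the inverse bound from~\eqref{eq:Kanto-1} then yields
\[
\|\boldsymbol{e}^{k+1}\| \le \frac{C_3 M}{2}\,\|\boldsymbol{e}^k\|^2.
\]

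To conclude, I would fix any $q\in(0,1)$ and choose $\rho = \min\big(\rho_0, \tfrac{2q}{C_3 M}\big)$. The quadratic estimate then shows that $\|\boldsymbol{e}^0\| \le \rho$ implies $\|\boldsymbol{e}^{k+1}\| \le q\,\|\boldsymbol{e}^k\|$ as long as $\boldsymbol{e}^k$ stays in the ball, so by induction the iterates remain in $\ov{B(\btau^n,\rho_0)}$ and $\|\boldsymbol{e}^k\| \le q^k \|\boldsymbol{e}^0\| \to 0$. This gives $\btau^{n,k}\to\btau^n$ and, as a byproduct, the asymptotic quadratic rate~\eqref{eq:quadra}.

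The one point requiring care is the bootstrapping between the two bounds: the remainder estimate is valid only inside $\ov{B(\btau^n,\rho_0)}$, where $M$ controls $D^2\Ff_n$, whereas the uniform bound~\eqref{eq:Kanto-1} ensures $[\bbJ_{\Ff_n}(\btau^{n,k})]^{-1}$ is controlled regardless of where the iterate sits. The delicate part is thus to guarantee that the iterates never escape the ball on which $M$ was computed, which is exactly what the self-improving contraction $\|\boldsymbol{e}^{k+1}\| \le q\|\boldsymbol{e}^k\|$, secured by the choice of $\rho$, achieves.
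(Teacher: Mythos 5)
Your proof is correct. For comparison: the paper does not actually prove Theorem~\ref{thm:Kanto} --- it quotes it as a simplified Newton--Kantorovich theorem, delegating the proof to \cite{Kanto48,Ortega68,OR70} and a quantitative version to \cite{GT74}. What you prove is the classical \emph{local convergence} theorem rather than Kantorovich's theorem proper: Kantorovich obtains the existence of the root as part of the conclusion, building the estimates around the \emph{initial guess}, whereas you presuppose the root and center your ball at $\btau^n$. That is legitimate here because Proposition~\ref{prop:existence} already provides existence and uniqueness of $\btau^n$, and it makes the argument elementary and self-contained: integral form of the Taylor remainder, uniform inverse bound from \eqref{eq:Kanto-1}, the quadratic recursion $\|\boldsymbol{e}^{k+1}\|\le \tfrac{C_3 M}{2}\|\boldsymbol{e}^k\|^2$, and the radius $\rho=\min\big(\rho_0,\,2q/(C_3M)\big)$ that traps the iterates in the ball on which $M$ bounds $D^2\Ff_n$; the bootstrap you single out is handled correctly, and the quadratic rate \eqref{eq:quadra} indeed comes for free. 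Your side remark that only $C_3$ is genuinely needed for the purely local statement is accurate; what the citation (and the bound $C_2$) buys the paper is quantitative control of the radius --- exploited later in the remark on global convergence for small time steps --- without reference to a neighborhood of the unknown solution, which your constant $M$, being a local bound near $\btau^n$, cannot provide. One cosmetic edge case: if $M=0$ your formula for $\rho$ degenerates, but then Newton converges in a single step, so one takes $\rho=\rho_0$.
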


Our strategy in the sequel is to prove that a non-degenerate parametrization (in the sense of~\eqref{eq:param-nondeg}) 
yields estimates~\eqref{eq:Kanto-1} in the subordinate matrix $1$-norm.

\begin{rem}\label{rem:non-smooth}
The assumption $\Ff \in \Cc^1$ can be relaxed. More precisely, Newton method can be extended 
to the case where $\Ff$ is merely semi-smooth~\cite{Clarke90} following the way proposed by 
Qi and Sun in~\cite{QS93}. Quadratic convergence is preserved in this nonsmooth case provided 
$\Ff_n$ is semi-smooth of order 1, cf.~\cite[Theorem 3.2]{QS93}. 
Our study can be extended to this more general case, but, for the sake
of simplicity, we have chosen to reduce our presentation to the classical smooth framework.
\end{rem}

\subsection{Some technical lemmas related to $M$-matrices}\label{ssec:Mmatrices}

Because of the elliptic degeneracy of the problem when $\tau\ge 0$, we cannot apply the 
the results of~\cite{Fuhrmann01} to get estimates on the Jacobian matrix $\bbJ_{\Ff_n}$. 
We need to introduce some technical material to circumvent this difficulty.
Let us first define that notion of $\delta$-transmissive path (see also~\cite{CG16_MCOM, CG_VAGNL})

\begin{Def}[$\delta$-transmissive path]
Let $\delta>0$, let $\bbA = \left(a_{ij}\right)_{1\le i,j\le N} \in \Mm_N(\R)$ and let $(i,j) \in \{1,\dots, N\}$. 
\begin{enumerate}
\item A row-wise $\delta$-transmissive path $\Pp(i,j)$ of length $L$ from $i$ to $j$ associated to the matrix $\bbA$ 
consists in a list $\{k_0, \dots, k_L\}$ with 
\begin{enumerate}[(i)]
\item $k_0 = i$, $k_L = j$, and $k_p \neq k_q$ if $p \neq q$; 
\item  for all $p \in \{0,\dots, L-1\}$, one has  $a_{k_p k_{p+1}} < - \delta$.
\end{enumerate}
\item The list  $\{k_0, \dots, k_L\}$ is a  column-wise $\delta$-transmissive path associated to the matrix $\bbA$ 
if it is a row-wise $\delta$-transmissive path associated to $\bbA^T$.
\end{enumerate}
\end{Def}

\begin{Def}[$(\delta,\Delta)$-M-matrices] \label{Def:dDM}
Let $\delta, \Delta>0$ be such that $\Delta > \delta$.  
\begin{enumerate}
\item 
A matrix $\bbA =\left(a_{ij}\right)_{1\le i,j\le N} \in \Mm_N(\R)$ is said to be a row-wise $(\delta,\Delta)$-M-matrix if 
\begin{enumerate}[(i)]
\item for all $i \in \{1,\dots, N\}$, one has $\delta \le a_{ii} \le \Delta$, $a_{ij} \le 0$ for all $j \neq i$ 
and $\sum_{j=1}^N a_{ij} \ge 0$;
\item the set $\Ii_\delta(\bbA) = \left\{i \in \{1,\dots, N\} \; \middle| \; \sum_{j=1}^N a_{ij} \ge \delta \right\}$ is 
not empty, and for all $i \in \Ii_\delta(\bbA)^c =  \{1,\dots, N\}\setminus  \Ii_\delta(\bbA)$, 
there exists a $\delta$-transmissive path $\Pp(i,j)$ with $j \in \Ii_\delta(\bbA)$.
\end{enumerate}
\item A matrix $\bbA =\left(a_{ij}\right)_{1\le i,j\le N} \in \Mm_N(\R)$ is said to be a column-wise $(\delta,\Delta)$-M-matrix 
if $\bbA^T$ is a row-wise $(\delta,\Delta)$-M-matrix.
\end{enumerate}
\end{Def}

It is well known that $M$-matrices are invertible matrices. The goal of Lemma~\ref{lem:row} and of Corollary~\ref{coro:column}
is to get uniform estimates on the inverse of a uniform M-matrix. Let us stress that the estimates we obtain are far from 
being optimal in the applications we have in mind, namely the Finite Volume discretization of Richards' equation.

\begin{lem}\label{lem:row}
Let $\bbA\in \Mm_N(\R)$ be a row-wise $(\delta,\Delta)$-M-matrix, 
then there exists $C$ depending only on $\delta, \Delta$ and $N$ such that 
$
\| \bbA^{-1}\|_\infty \le C.
$
\end{lem}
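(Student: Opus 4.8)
The plan is to turn the bound on $\|\bbA^{-1}\|_\infty$ (the maximal absolute row sum of $\bbA^{-1}$) into a discrete maximum-principle estimate and then to propagate that estimate along $\delta$-transmissive paths. Since a $(\delta,\Delta)$-M-matrix is invertible with $\bbA^{-1}\ge\0$ entrywise (standard M-matrix theory, in line with the invertibility recalled above), the induced norm simplifies drastically: writing $\bz=\bbA^{-1}\1$ with $\1$ the all-ones vector, one has $\bbA\bz=\1$, $\bz\ge\0$, and
\[
\|\bbA^{-1}\|_\infty=\max_i\sum_j(\bbA^{-1})_{ij}=\max_i z_i=:Z .
\]
It therefore suffices to bound $Z$ by a constant depending only on $\delta,\Delta,N$.

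Next I would establish a master inequality valid at every index $i$. Inserting $z_j=Z-(Z-z_j)$ into the $i$-th equation $\sum_j a_{ij}z_j=1$ and separating the diagonal term gives the identity
\[
\sum_{j\neq i}(-a_{ij})(Z-z_j)=1-Z\sum_j a_{ij}+a_{ii}(Z-z_i).
\]
Here every summand on the left is nonnegative (because $-a_{ij}\ge0$ and $Z-z_j\ge0$), while $Z\sum_j a_{ij}\ge0$ and $a_{ii}\le\Delta$. Dropping the appropriate nonnegative terms yields the two bounds
\[
\sum_{j\neq i}(-a_{ij})(Z-z_j)\le 1+\Delta(Z-z_i)
\quad\text{and}\quad
Z\sum_j a_{ij}\le 1+\Delta(Z-z_i).
\]

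Now the propagation. Let $Z=z_{i_0}$. If $i_0\in\Ii_\delta(\bbA)$, the second bound with $\sum_j a_{i_0 j}\ge\delta$ and $z_{i_0}=Z$ gives at once $Z\le 1/\delta$. Otherwise I fix a $\delta$-transmissive path $\{k_0=i_0,\dots,k_L=j^\star\}$ with $j^\star\in\Ii_\delta(\bbA)$ and set $d_p:=Z-z_{k_p}\ge0$, so $d_0=0$. Retaining only the $k_{p+1}$-term on the left of the first bound at $i=k_p$ and using $-a_{k_p k_{p+1}}>\delta$ produces the linear recursion $\delta\,d_{p+1}\le 1+\Delta d_p$, which solves explicitly as $d_p\le\frac1\delta\sum_{m=0}^{p-1}(\Delta/\delta)^m$. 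Since the path is simple, $L\le N-1$, so $d_L$ is bounded by a constant $C(\delta,\Delta,N)$. Applying the second bound at $i=j^\star$, where $\sum_j a_{j^\star j}\ge\delta$, then gives $\delta Z\le 1+\Delta d_L$, hence $Z\le(1+\Delta d_L)/\delta\le C$, which is the desired estimate.

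The \emph{main obstacle} is the diagonal bookkeeping in the master identity: in contrast with the continuous maximum principle, $z_{k_p}$ need not equal $Z$ away from $i_0$, so the term $a_{k_p k_p}(Z-z_{k_p})$ cannot be discarded and must be absorbed using $a_{ii}\le\Delta$. This is precisely what makes the step-to-step estimate \emph{linear} (a recursion) rather than yielding a uniform bound immediately, and it forces $\Delta$, not merely $\delta$, to enter the constant. A secondary feature is that the constant grows geometrically in the path length, which explains the dependence on $N$ through $L\le N-1$. Finally, a fully self-contained variant can avoid invoking $\bbA^{-1}\ge\0$: one runs the very same propagation on a general solution of $\bbA\bz=\bw$, bounding $\max_i z_i$ from above (with a short sign discussion reducing to the case $\max_i z_i>0$) and applying it to $\pm\bw$ to control $\|\bz\|_\infty$ by $\|\bw\|_\infty$.
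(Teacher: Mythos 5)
Your proof is correct, and its engine is identical to the paper's: the linear recursion $\delta\, d_{p+1}\le 1+\Delta\, d_p$ propagated along a $\delta$-transmissive path (geometric growth in the path length $L\le N-1$), with the strictly dominant row at the end of the path closing the estimate. The difference is only in the setup. You invoke entrywise nonnegativity of $\bbA^{-1}$ to reduce the norm bound to the single system $\bbA\bz=\1$ and then control $Z=\max_i z_i$; the paper instead takes an arbitrary $\bz$ with $\|\bz\|_\infty=1$, normalized so that $z_i=1$ for some $i$, and proves the lower bound $\|\bbA\bz\|_\infty\ge 1/c_{\Ll+1}$ with $c_p=\big((\Delta/\delta)^p-1\big)/(\Delta-\delta)$ and $\Ll\le N-1$, i.e.\ it bounds $\min_{\|\bz\|_\infty=1}\|\bbA\bz\|_\infty$ from below via the same induction (the paper's $1-z_{k_p}\le c_p\alpha$ is your $d_p$ after rescaling the residual). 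What the paper's formulation buys is self-containedness: a uniform lower bound on $\|\bbA\bz\|_\infty$ over the unit sphere simultaneously proves injectivity, hence invertibility, with no prior knowledge about $\bbA^{-1}$. Your route presupposes that $\bbA$ is a nonsingular M-matrix with $\bbA^{-1}\ge \0$ entrywise; this is true, but not free here --- nonsingularity of a weakly diagonally dominant Z-matrix is exactly what the transmissive-path (weakly chained dominance) condition must deliver, so you should cite that fact explicitly rather than fold it into ``standard M-matrix theory'' (the paper only recalls invertibility, not inverse-nonnegativity). What your formulation buys is cleaner bookkeeping: one explicit vector $\bz=\bbA^{-1}\1$, a single master identity, and no sign normalization. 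Your closing remark --- running the same propagation on $\bbA\bz=\bw$ for arbitrary $\bw$ and a sign discussion --- is essentially the paper's proof, so the two arguments are interchangeable.
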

\begin{proof}
For the ease of reading, we denote $\|\cdot\|$ instead of $\|\cdot\|_\infty$, and $\Ii_\delta$ instead of $\Ii_\delta(\bbA)$.
The property $\| \bbA^{-1}\| = \frac1\alpha$ is equivalent to 
\be\label{eq:PF1}
\frac1{\|\bbA^{-1}\|}=\max_{\|\bz\|=1}\|\bbA \bz\| = \alpha.
\ee
Let $\bz = \left(z_i\right)_{1\le i \le N}\in \R^N$ with $\|\bz\|=1$. 
We assume, without loss of generality that there exists $i \in \{1,\dots N\}$ such that $z_i = 1$.
In the sequel, we denote by $\Ll$ the maximum length of a transmissive path, i.e., 
\be\label{eq:Ll}
\Ll := \max_{j \in \Ii_\delta(\bbA)^c} \min_{j\in \Ii_\delta(\bbA)}{\rm length} \left(\Pp(i,j)\right) \le N-1.
\ee

Assume first that $i \in \Ii_\delta$, then $\left(\bbA \bz\right)_i = \sum_j a_{ij} z_j \ge \delta$, 
hence $\|\bbA^{-1}\| \le \frac1\delta$.
Assume now that $i \in \Ii_\delta^c$, and let $\{k_p, \; 0 \le p \le L\}$ be a transmissive path with 
$k_0 = i$, $k_L \in \Ii_\delta$ and $L \le \Ll$. Denote by 
$$
c_p = \left( \left(\frac\Delta \delta \right)^p - 1\right) \frac1{\Delta - \delta}.
$$
Let us show by induction that 
\be\label{eq:HR}
z_{k_p} \ge 1- c_p \alpha \, \qquad \forall p \in \{0,\dots, L\}.
\ee
Since $z_{k_0}=1$, the relation~\eqref{eq:HR} holds for $p=0$. 
Now suppose that~\eqref{eq:HR} holds for some $p \in \{0,\dots, N-1\}$. 
One knows from~\eqref{eq:PF1} that 
$
\sum_{j=1}^N a_{k_pj} z_j \le \alpha, 
$
hence, using~\eqref{eq:HR}, that $\sum_{j=1}^N a_{k_pj}\ge 0$, and that 
$a_{k_p j} z_j \ge a_{k_p j}$ for all $j \notin \{p,p+1\}$, one gets that 
$$
-a_{k_pk_{p+1}} (1-z_{k_{p+1}})   \le \alpha (1+ c_p a_{k_pk_p}). 
$$
Since $a_{k_pk_p} \le \Delta$ and $a_{k_pk_{p+1}} \le -\delta$, this leads to 
$$
z_{k_{p+1}} \ge 1 - \alpha \frac{1+\Delta c_p}{\delta} = 1 - \alpha c_{p+1}, 
$$
so that the proof of~\eqref{eq:HR} is complete. 
As a consequence, one gets that $z_{k_L} \ge 1 - \alpha c_L$, and thus that 
$$
a_{k_Lk_L} (1- c_L \alpha) + \sum_{j \neq k_L} a_{k_Lj} \le \alpha.
$$
Using that $k_L \in \Ii_\delta$, we obtain that 
$
\left(\Delta c_L +1 \right)\alpha \ge \delta.
$
Therefore, $$\alpha \ge \frac1{c_{L+1}} \ge \frac1{c_{\Ll+1}}$$
since the length $L$ of the path $\{k_0, \dots k_L\}$ is bounded by $\Ll$ defined by~\eqref{eq:Ll}.
\end{proof}

\begin{coro}\label{coro:column}
Let $\bbA\in \Mm_N(\R)$ be a column-wise $(\delta,\Delta)$-M-matrix, 
then there exists $C$ depending only on $\delta, \Delta$ and $N$ such that 
$
\| \bbA^{-1}\|_1 \le C.
$
\end{coro}

\subsection{A uniform non-degeneracy result}\label{ssec:UND}

\begin{prop}\label{prop:Jac}
Assume that $s$ and $u$ satisfy he non-degeneracy condition~\eqref{eq:param-nondeg}, 
then there exist $C_2$ depending only on $\alpha^\star, \g, \|\lambda'\|_{\infty}, \dt^n$ 
and $\Tt$, and $C_3$ depending only on $\alpha_\star, \alpha^\star, \dt^n$ and $\Tt$ such that 
$$
\| \bbJ_{\Ff_n}(\btau) \|_1 \le C_2, \qquad \| \left[\bbJ_{\Ff_n}(\btau)\right]^{-1} \|_1 \le C_3, \qquad  \forall \btau \in \R^{\#\Tt}.
$$
\end{prop}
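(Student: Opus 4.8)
The plan is to read off the entries of $\bbJ_{\Ff_n}(\btau)$, exploit the sign structure inherited from the monotonicity of $s$, $u$ and $\lambda$, and then reduce the inverse bound to the $M$-matrix machinery of \S\ref{ssec:Mmatrices}. Differentiating $f_K$ gives
\[
\left(\bbJ_{\Ff_n}(\btau)\right)_{KK} = s'(\tau_K) + \frac{\dt^n}{m_K}\sum_{\sig\in\Ee_K}\left(m_\sig g_{K,\sig}^-\lambda'(s(\tau_K))s'(\tau_K) + A_\sig u'(\tau_K)\right),
\]
while for $\sig = K|L\in\Ee_{\rm int}$ one has
\[
\left(\bbJ_{\Ff_n}(\btau)\right)_{KL} = -\frac{\dt^n}{m_K}\left(m_\sig g_{K,\sig}^-\lambda'(s(\tau_L))s'(\tau_L) + A_\sig u'(\tau_L)\right)\le 0,
\]
all remaining entries being zero. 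The estimate $\|\bbJ_{\Ff_n}(\btau)\|_1\le C_2$ is then a direct bound on the absolute column sums using $0\le s',u'\le\alpha^\star$ (from~\eqref{eq:su-Lip}), $|g_{K,\sig}|\le|\g|$ and the boundedness of $\lambda'$; this is precisely where $C_2$ inherits its dependence on $\g$ and $\|\lambda'\|_\infty$. For the inverse, since $\bbJ_{\Ff_n}$ itself has no signed row or column sums (the nonlinearities are evaluated at the \emph{neighbouring} unknowns), I would first rescale by the cell volumes: set $\bbD=\mathrm{diag}(m_K)_{K\in\Tt}$ and work with $\bbM:=\bbD\,\bbJ_{\Ff_n}(\btau)$, so that $\bbJ_{\Ff_n}(\btau)^{-1}=\bbM^{-1}\bbD$ and hence $\|\bbJ_{\Ff_n}(\btau)^{-1}\|_1\le\|\bbM^{-1}\|_1\max_K m_K$.

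The heart of the argument is to show that $\bbM$ is a column-wise $(\delta,\Delta)$-$M$-matrix in the sense of Definition~\ref{Def:dDM} and then to invoke Corollary~\ref{coro:column}. The diagonal entries $m_K(\bbJ_{\Ff_n})_{KK}$ are bounded above by some $\Delta$, and bounded below by some $\delta>0$ using~\eqref{eq:param-nondeg}: either $s'(\tau_K)\ge\alpha_\star$, giving $\ge m_K\alpha_\star$, or $u'(\tau_K)\ge\alpha_\star$, giving $\ge\dt^n\alpha_\star\sum_{\sig\in\Ee_K}A_\sig$; the off-diagonal entries are $\le 0$. The decisive computation is the sign of the column sums of $\bbM$: summing $m_K(\bbJ_{\Ff_n})_{KL}$ over $K$, the interior contributions pair across each edge $\sig=K|L$ (where $g_{K,\sig}^-=g_{L,\sig}^+$ and $A_\sig,m_\sig$ are symmetric), and the discrete divergence-free identity~\eqref{eq:div-nulle} together with $g+g^-=g^+$ collapses the convective part to a boundary term, yielding
\[
\sum_{K\in\Tt} m_K\left(\bbJ_{\Ff_n}(\btau)\right)_{KL} = m_L s'(\tau_L) + \dt^n\left(\lambda'(s(\tau_L))s'(\tau_L)\sum_{\sig\in\Ee_{L,{\rm ext}}} m_\sig g_{L,\sig}^+ + u'(\tau_L)\sum_{\sig\in\Ee_{L,{\rm ext}}} A_\sig\right)\ge m_L s'(\tau_L)\ge 0.
\]
Thus the convective terms only reinforce the column-sum positivity and never spoil the $M$-matrix structure.

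It remains to verify the connectivity condition Definition~\ref{Def:dDM}(ii), and this is the step I expect to be the main obstacle, precisely because of the elliptic degeneracy on $\{\tau\ge 0\}$ where $s'\equiv 0$. From the column-sum formula, every boundary cell $L$ (those with $\Ee_{L,{\rm ext}}\neq\emptyset$) belongs to $\Ii_\delta$: by~\eqref{eq:param-nondeg} either $s'(\tau_L)\ge\alpha_\star$ or $u'(\tau_L)\ge\alpha_\star$, and each case forces the column sum above $\delta$ for $\delta$ small enough (depending only on $\alpha_\star$, $\dt^n$ and $\Tt$); likewise any cell with $s'(\tau_L)\ge\alpha_\star$ lies in $\Ii_\delta$. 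Consequently a cell $L\notin\Ii_\delta$ is necessarily an \emph{interior} cell with $s'(\tau_L)<\alpha_\star$, hence $u'(\tau_L)\ge\alpha_\star$; for such $L$ every entry $m_K(\bbJ_{\Ff_n})_{KL}$ attached to an edge $\sig=K|L$ is $\le-\dt^n A_\sig u'(\tau_L)\le-\dt^n\alpha_\star\min_\sig A_\sig<-\delta$, so \emph{every} edge out of $L$ is $\delta$-transmissive. Taking a shortest path in the mesh adjacency graph from $L$ to $\Ii_\delta$ — which is nonempty since it contains all boundary cells, $\O$ being connected — all its intermediate vertices lie outside $\Ii_\delta$, hence are interior cells with $u'\ge\alpha_\star$, so every step is $\delta$-transmissive and the vertices are distinct. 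This furnishes the required transmissive path, so $\bbM$ is a column-wise $(\delta,\Delta)$-$M$-matrix and Corollary~\ref{coro:column} gives $\|\bbM^{-1}\|_1\le C(\delta,\Delta,\#\Tt)$; multiplying by $\bbD$ produces $\|\bbJ_{\Ff_n}(\btau)^{-1}\|_1\le C_3$.

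The two delicate points are therefore the bookkeeping in the column-sum identity — where~\eqref{eq:div-nulle} is essential to cancel the convective contribution down to a nonnegative boundary term — and the transmissive-connectivity argument, which must route around the saturated cells where the diffusion degenerates. The feature that saves the day is that the vanishing of $s'$ in the saturated region is exactly compensated by $u'\ge\alpha_\star$ through~\eqref{eq:param-nondeg}, so the diffusive coupling $A_\sig u'$ keeps every degenerate cell $\delta$-transmissively connected to the Dirichlet boundary. Regarding the dependence of the constants, $\delta$ — the critical small parameter governing both the diagonal lower bound and the transmissive threshold — is controlled solely by $\alpha_\star$, $\dt^n$ and $\Tt$; the convective data $\g$ and $\|\lambda'\|_\infty$ enter at most through the upper diagonal bound $\Delta$, which I would bound separately so as to keep the quoted dependence of $C_3$ on $\alpha_\star,\alpha^\star,\dt^n,\Tt$ only.
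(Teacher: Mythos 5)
Your proof is correct, and its core is the same as the paper's: write the Jacobian entries explicitly, exploit the fact that every entry of a given column carries the nonlinearities evaluated at that column's unknown, verify the column-wise $(\delta,\Delta)$-M-matrix structure of Definition~\ref{Def:dDM} through the dichotomy $s'(\tau_K)\ge\alpha_\star$ or $u'(\tau_K)\ge\alpha_\star$ from~\eqref{eq:param-nondeg}, and conclude with Corollary~\ref{coro:column}. Where you deviate, you are in fact more careful than the paper, on two points. First, the paper applies the M-matrix argument to the matrix whose entries it displays, whose off-diagonal entries carry the factor $\dt^n/m_K$; but the true derivative $\p f_L/\p\tau_K$ carries $\dt^n/m_L$, and on a non-uniform mesh the column sums of the true Jacobian are \emph{not} signed (the paper's displayed matrix is really the similarity transform $\mathrm{diag}(m_K)\,\bbJ_{\Ff_n}\,\mathrm{diag}(m_K)^{-1}$, a point it never acknowledges). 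Your left rescaling $\bbM=\mathrm{diag}(m_K)\,\bbJ_{\Ff_n}$, together with the column-sum identity that collapses the convective part to a nonnegative boundary term via~\eqref{eq:div-nulle} and $g_{K,\sig}^-=g_{L,\sig}^+$, and the final step $\|\bbJ_{\Ff_n}^{-1}\|_1\le\|\bbM^{-1}\|_1\max_K m_K$, repairs this cleanly; the extra mesh factor is harmless since $C_3$ may depend on $\Tt$. Second, for the connectivity condition (ii), the paper invokes irreducibility of the pure-diffusion matrix $\bbD^n$ to produce transmissive paths, but a generic mesh path may pass through a cell of $\Ii_\delta$ where $u'$ is small, so its entries need not be $<-\delta$; your shortest-path truncation, which forces every intermediate cell to lie outside $\Ii_\delta$ and hence to have $u'\ge\alpha_\star$, supplies exactly the missing precision. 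One caveat you share with the paper: the constant of Corollary~\ref{coro:column} depends on $\Delta$ as well as $\delta$, and $\Delta$ necessarily involves $|\g|$ and $\|\lambda'\|_\infty$ through the convective diagonal terms, so neither your proof nor the paper's actually delivers a $C_3$ independent of these quantities as the statement claims; you flag this (``bound $\Delta$ separately'') but do not show how to remove the dependence, and I do not believe it can be removed without sharpening Lemma~\ref{lem:row} itself.
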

\begin{proof}
Let us first make the Jacobian matrix $\bbJ_{\Ff_n}(\btau) = \left(j_{KL}^n(\btau)\right)_{K,L \in \Tt}$ explicit:
\begin{align*}
j_{KK}^n(\btau) =& s'(\tau_K) + \frac{\dt^n}{m_K} \sum_{\sig \in \Ee_K} \left( m_\sig g_{K,\sig}^+ \lambda'(s(\tau_K)) s'(\tau_K) 
+ A_\sig {u'(\tau_K)}\right), \\
j_{LK}^n(\btau) =& -  \frac{\dt^n}{m_K} \left(m_\sig g_{K,\sig}^+ \lambda'(s(\tau_K)) s'(\tau_K) 
+ A_\sig {u'(\tau_K)}\right) \quad \text{where}\;\sig = K|L.
\end{align*}
Proving that $\| \bbJ_{\Ff_n} \|_1 \le C_2$ is easy since all the coordinates of $\bbJ_{\Ff_n}$ are uniformly 
bounded w.r.t. $\btau$ thanks to the upper bound~\eqref{eq:su-Lip} on $s'$ and $u'$.

Let us now prove that $\| \left[\bbJ_{\Ff_n}(\btau)\right]^{-1} \|_1 \le C_3$. 
Thanks to Corollary~\ref{coro:column}, it suffices to check that $\bbJ_{\Ff_n}$ is a column-wise $(\delta, \Delta)$-M-matrix 
where $\delta$ and $\Delta$ depend on the prescribed quantities. 
First, it is easy to check that 
\be\label{eq:(i)}
j_{LK}^n(\btau) \le 0 \; \text{if} \; L \neq K, \quad \sum_{L\in\Tt} j_{LK}^n(\btau) \ge 0, \quad \text{and}\quad0 \le \delta_1 \le j_{KK}^n(\btau) \le  \Delta, 
\ee
where 
\begin{align*}
\delta  =\;& \alpha_\star \min\left\{ 1 ; \dt^n \min_K \left(\frac{\min_{\sig \in \Ee_K} A_\sig}{m_K}\right) \right\}, 
\\
\Delta=\;&\alpha^\star \max_{K\in\Tt} \left(1 +\frac{\dt^n}{m_K} \sum_{\sig \in \Ee_K} 
(m_\sig g_{K,\sig}^+ \|\lambda'\|_\infty + A_\sig )  \right).
\end{align*}
Therefore, Condition (i) in Definition~\ref{Def:dDM} is fulfilled. 

It follows from the non-degeneracy condition~\eqref{eq:param-nondeg} that for all $\btau \in \X_{\Tt, \rm int}$ and 
all $K\in \Tt$, either $s'(\tau_K) \ge \alpha_\star$ or $u'(\tau_K) \ge \alpha_\star$. Let $K$ be such that 
$s'(\tau_K) \ge \alpha_\star$, then 
$$
\sum_{L\in\Tt} j_{LK}^n(\btau) \ge \alpha_\star \ge \delta, 
$$
whence $K \in \Ii_\delta(\bbJ_{\Ff_n}(\btau)^T)$.
On the other hand, if $u'(\tau_K) \ge \alpha_\star$ and $\Ee_{K,\rm ext} \neq \emptyset$, 
then 
$$
\sum_{L\in\Tt} j_{LK}^n(\btau) \ge \alpha_\star \frac{\dt^n}{m_K}  \sum_{\sig \in \Ee_{K,\rm ext}} A_\sig \ge \delta.
$$
As a consequence, if $K \notin  {\Ii_{\delta}(\bbJ_{\Ff_n}(\btau)^T)}$, one has 
necessarily that $u'(\tau_K) \ge \alpha_\star$ and $\Ee_{K,\rm ext} = \emptyset$. 
But in this case, 
$$
j_{LK}^n(\btau) \le - \alpha_\star \frac{\dt^n  A_\sig}{m_K} \le -\delta \quad \text{if} \; \sig = K|L \in \Ee_{K,\rm int}.
$$
The matrix $\bbD^n \in \R^{\#\Tt \times \#\Tt}$ defined by 
$$
\bbD_{KK}^n = \alpha_\star \frac{\dt^n}{m_K} \sum_{\sig \in \Ee_K} A_\sig, \qquad 
\bbD_{KL}^n = \begin{cases}
- \alpha_\star \frac{\dt^n}{m_K} A_\sig & \text{if}\; \sig = K|L\\
0 & \text{otherwise}
\end{cases}
$$
is irreducible and 
admits $\delta$-transmissive paths from $K$ to $L$ for all $(K,L) \in \Tt^2$.
This ensures that $\bbJ_{\Ff_n}(\btau)$ admits a $\delta$-transmissive path 
from any cell $K \in {\Ii_{\delta}(\bbJ_{\Ff_n}(\btau)^T)}^c$ to any cell 
$L \in {\Ii_{\delta}(\bbJ_{\Ff_n}(\btau)^T)}$. Therefore, Condition (ii) of Definition~\ref{Def:dDM} 
is fulfilled. $\bbJ_{\Ff_n}(\btau)^T$ is then a row-wise $(\delta,\Delta)$-M-matrix, thus 
$\bbJ_{\Ff_n}(\btau)$ is a column-wise $(\delta,\Delta)$-M-matrix.
\end{proof}

The following corollary is a straightforward compilation of Newton-Kantorovich theorem~\ref{thm:Kanto} 
with Proposition~\ref{prop:Jac}.
\begin{coro}[local convergence of Newton's method]\label{coro:local-conv}
There exists $\rho^n>0$ such that the Newton method~\eqref{eq:Newton} converges as soon as 
$\|\tau^{n,0} - \tau^n\|\le \rho^n$.
\end{coro}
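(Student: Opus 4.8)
The plan is to combine the two main ingredients established earlier in the paper: the Newton--Kantorovich theorem (Theorem~\ref{thm:Kanto}) and the uniform M-matrix bounds on the Jacobian (Proposition~\ref{prop:Jac}). Theorem~\ref{thm:Kanto} requires a uniform upper bound $C_2$ on $\|\bbJ_{\Ff_n}(\btau)\|$ and a uniform bound $C_3$ on $\|[\bbJ_{\Ff_n}(\btau)]^{-1}\|$, valid for all $\btau \in \X_\Tt$, and then produces a radius $\rho$ such that the Newton iteration~\eqref{eq:Newton} converges whenever the initial guess lies within $\rho$ of the root. Proposition~\ref{prop:Jac} supplies exactly these two bounds (in the subordinate $1$-norm) under the non-degeneracy condition~\eqref{eq:param-nondeg} on the parametrization, with $C_2$ depending only on $\alpha^\star, \g, \|\lambda'\|_\infty, \dt^n$ and $\Tt$, and $C_3$ depending only on $\alpha_\star, \alpha^\star, \dt^n$ and $\Tt$.

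Concretely, I would first invoke Proposition~\ref{prop:Jac} to obtain the constants $C_2$ and $C_3$ for the fixed discretization $\Dd$ and the fixed time step index $n$. Since these bounds hold uniformly over all $\btau \in \R^{\#\Tt}$, the hypotheses~\eqref{eq:Kanto-1} of the Newton--Kantorovich theorem are satisfied a fortiori. Applying Theorem~\ref{thm:Kanto} then yields a radius $\rho = \rho^n > 0$ (depending on $C_2, C_3$, hence ultimately on $\alpha_\star, \alpha^\star, \g, \|\lambda'\|_\infty, \dt^n$ and $\Tt$) such that $\|\btau^{n,0} - \btau^n\| \le \rho^n$ forces the convergence $\btau^{n,k} \to \btau^n$. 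This is precisely the assertion of Corollary~\ref{coro:local-conv}, recalling that the root $\btau^n$ exists and is unique by Proposition~\ref{prop:existence}.

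There is essentially no obstacle here: the corollary is, as the paper itself states, a straightforward compilation of the two earlier results, so the only point requiring a word of care is a norm-consistency check. Proposition~\ref{prop:Jac} provides bounds in the matrix $1$-norm, whereas Theorem~\ref{thm:Kanto} is stated in terms of an unspecified operator norm; since all norms on the finite-dimensional space $\X_\Tt \simeq \R^{\#\Tt}$ are equivalent, the $1$-norm estimates transfer (with discretization-dependent equivalence constants absorbed into $\rho^n$) to whatever norm underlies the Kantorovich statement. The genuine mathematical content has already been discharged in Proposition~\ref{prop:Jac}; the present statement merely records the conclusion. Hence the proof reduces to citing Proposition~\ref{prop:Jac} to verify~\eqref{eq:Kanto-1} and then citing Theorem~\ref{thm:Kanto} to extract $\rho^n$.
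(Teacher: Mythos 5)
Your proposal is correct and follows exactly the paper's own argument: the paper proves this corollary precisely as a "straightforward compilation" of Theorem~\ref{thm:Kanto} with Proposition~\ref{prop:Jac}, which is what you do (and your norm-consistency remark is consistent with the paper's announced strategy of establishing~\eqref{eq:Kanto-1} in the subordinate matrix $1$-norm).
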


\begin{rem}[global convergence for small enough time steps]
The radius $\rho^n$ appearing in Corollary~\ref{coro:local-conv} can be estimated thanks to~\cite{GT74}
as soon as the second order derivatives $s''$ and $u''$ of $s$ and $u$ are uniformly bounded.
It appears in a fairly natural way that $\rho^n$ is a non-decreasing function of $\dt^n$. 
Then choosing $\dt^n$ small enough, the variation between the time steps $t^{n-1}$ and $t^n$ is small and 
$$\| \btau^{n-1} - \btau^n\|_1 = \| \btau^{n,0} - \btau^n\|_1 \le \rho^n.$$
Therefore, the convergence of the Newton method is ensured if one uses an adaptive time-step algorithm 
(as for instance in~\cite{CG_VAGNL}).
\end{rem}

\subsection{Control of the error induced by the inexact Newton procedure}\label{ssec:inexact}
Assume that $s(\btau^{n-1})$ is exactly known, then 
the exact solution $\btau^n$ is obtained as the limit of $\left(\btau^{n,k}\right)_k$. 
Computing the exact value of $\btau^n$ is impossible and
a convenient criterion must be adopted in order to stop the iterative procedure.
This yields errors that accumulate along time. The goal of this section is to quantify 
the error induced by the inexact resolution of the nonlinear system.

\subsubsection{One step error estimates}\label{sssec:onestep}
In this section, we assume that $s(\btau^{n-1})$ is exact, and we want to quantify the error 
corresponding to one single iteration. In what follows, 
we consider the following residual based stopping criterion:
\be\label{eq:stopping}
\text{stop the iterative procedure~\eqref{eq:Newton} if}\; \left\|\Ff_n(\btau^{n,k})\right\|_1 = \sum_{K\in\Tt} \left|f_K(\tau_K^{n,k})\right| \le \eps \dt^n 
\ee
for some prescribed tolerance $\eps>0$. We denote by $\btau^n_\eps=\btau^{n,k}$ when~\eqref{eq:stopping} is fulfilled and the loop is stopped.
Then the non-degeneracy~\eqref{eq:Kanto-1} of $[\bbJ_{\Ff_n}]^{-1}$ provides directly the following error estimate:
$$
\|\btau^{n}_\eps - \btau^n\|_1 \le C_3 \eps \dt^n.
$$
More than in the variable $\tau$, whose physical sense is unclear, we are interested in evaluating 
the error on the reconstructed saturation profile.
Thanks to the Lipschitz continuity of $s$ ---recall the non-degeneracy assumption~\eqref{eq:param-nondeg}---, we have 
\be\label{eq:err-tau}
\|\pi_\Tt s(\btau^{n}_\eps) - \pi_\Tt s(\btau^n)\|_{L^1(\O)} \le C_4 \eps \dt^n, 
\ee
where $C_4 = \alpha^\star (\max_K m_K)  C_3$.

\subsubsection{Quantification of the error accumulation}

In the previous paragraph, it was assumed that $s(\btau^{n-1})$ was exactly known. 
In practice, one may consider that we know $s(\btau^0)$ exactly, but merely the approximation of 
$s(\btau^{n}_\eps)$ obtained after stopping the Newton iterative procedure after a finite number of iterations. 
Denote by $\left(s(\btau^n)\right)_{1\le n \le N}$ the iterated exact solutions to the scheme~\eqref{eq:scheme}--\eqref{eq:FKsig}, 
and by $\left(s(\btau^{n}_\eps)\right)_{1\le n \le N}$ the iterated inexact solutions obtained \emph{via} the Newton method 
with the stopping criterion~\eqref{eq:stopping}. Define $\Ff_{n,\eps}: \X_{\Tt, \rm int} \to \X_{\Tt, \rm int}$ by 
\be\label{eq:Ffneps}
\Ff_{n,\eps}(\btau) = \Ff_{n}(\btau) + s(\btau^{n-1})- s(\btau_\eps^{n-1}).
\ee
In particular, one has $\bbJ_{\Ff_{n,\eps}}(\btau) = \bbJ_{\Ff_{n}}(\btau)$ for all $\btau$ and the results of \S\ref{ssec:UND} 
still hold for $\Ff_{n,\eps}$ instead of $\Ff_n$. The inexact Newton method then writes
\begin{enumerate}
\item {\tt Initialization:} define $\btau^0_\eps = \btau^0$ by~\eqref{eq:btau0}.
\item {\tt From $t^{n-1}$ to $t^n$:} 
\begin{enumerate}
\item set $\btau^{n}_\eps = \btau^{n-1}_\eps$
\item iterate Newton's algorithm until $\|\Ff_{n,\eps}(\btau^{n}_\eps)\|_1 \le \eps \dt^n$
\end{enumerate}
\end{enumerate}
Then, as claimed by the following statement, the $L^1(\O)$ error on the 
reconstructed saturation growth at most linearly with time. In particular, no 
exponential amplification of the error occurs in this context. 
\begin{prop}\label{prop:err-inexact}
Let $\left(\btau^n\right)_n$ be the exact solution to the scheme \eqref{eq:scheme}--\eqref{eq:FKsig}, and 
let $\left(\btau^n_\eps\right)_n$  be the approximate solution computed by the inexact Newton method,  
then
\be\label{eq:err-inexact}
\|\pi_\Tt s(\btau^n_\eps) - \pi_\Tt s(\btau^{n})\|_{L^1(\O)} \le C_4 \eps t^n, \qquad \forall n \in \{0,\dots, N\}, 
\ee
where $C_4$ is introduced was~\eqref{eq:err-tau}.
\end{prop}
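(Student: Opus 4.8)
The plan is to reduce the global error accumulation to a one-step recursion that is controlled by the discrete $L^1$-contraction principle of Lemma~\ref{lem:contract}. The key device is to insert, at each time level $n$, an auxiliary \emph{exact} solution $\wt\btau^n \in \X_{\Tt,\rm int}$ of the scheme fed with the inexact previous state, namely the unique solution of $\Ff_{n,\eps}(\wt\btau^n) = \0$. Its existence and uniqueness follow from Proposition~\ref{prop:existence}, since by~\eqref{eq:Ffneps} the equation $\Ff_{n,\eps}(\btau) = \0$ is exactly the scheme~\eqref{eq:scheme}--\eqref{eq:FKsig} in which $s(\btau^{n-1})$ has been replaced by $s(\btau_\eps^{n-1})$. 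With this convention, the inexact iterate $\btau^n_\eps$ is precisely the output of the stopped Newton loop approximating $\wt\btau^n$.

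First I would control the distance between $\btau^n_\eps$ and $\wt\btau^n$. Since $\bbJ_{\Ff_{n,\eps}}(\btau) = \bbJ_{\Ff_n}(\btau)$, the uniform non-degeneracy of Proposition~\ref{prop:Jac} holds verbatim for $\Ff_{n,\eps}$, so the one-step estimate~\eqref{eq:err-tau} applies with $\Ff_{n,\eps}$ in place of $\Ff_n$: the stopping criterion $\|\Ff_{n,\eps}(\btau^n_\eps)\|_1 \le \eps\dt^n$, the bound $\|[\bbJ_{\Ff_{n,\eps}}]^{-1}\|_1 \le C_3$, and the Lipschitz bound $\|s'\|_\infty \le \alpha^\star$ from~\eqref{eq:su-Lip} together yield
$$
\|\pi_\Tt s(\btau^n_\eps) - \pi_\Tt s(\wt\btau^n)\|_{L^1(\O)} \le C_4\,\eps\,\dt^n.
$$

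Next I would compare the two \emph{exact} scheme solutions $\wt\btau^n$ and $\btau^n$ at time level $n$: the former is issued from the datum $s(\btau_\eps^{n-1})$, the latter from $s(\btau^{n-1})$. This is exactly the setting of Lemma~\ref{lem:contract}, applied with previous states $\btau_\eps^{n-1}$ and $\btau^{n-1}$, giving
$$
\|\pi_\Tt s(\wt\btau^n) - \pi_\Tt s(\btau^n)\|_{L^1(\O)} \le \|\pi_\Tt s(\btau_\eps^{n-1}) - \pi_\Tt s(\btau^{n-1})\|_{L^1(\O)}.
$$
Writing $E_n := \|\pi_\Tt s(\btau^n_\eps) - \pi_\Tt s(\btau^n)\|_{L^1(\O)}$ and combining the two displays by the triangle inequality produces the recursion $E_n \le E_{n-1} + C_4\eps\dt^n$ for $n \ge 1$, with $E_0 = 0$ because $\btau^0_\eps = \btau^0$ by the initialization. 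Summing from $1$ to $n$ telescopes the contraction term and gives $E_n \le C_4\eps\sum_{m=1}^n \dt^m = C_4\eps\,t^n$, which is exactly~\eqref{eq:err-inexact}.

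I do not anticipate a serious obstacle: the only point requiring care is the bookkeeping of which previous state enters which problem, so that the contraction estimate is applied to two genuine exact solutions of the scheme while the Newton one-step estimate is applied to $\Ff_{n,\eps}$, whose Jacobian coincides with that of $\Ff_n$. The mechanism that rules out exponential amplification is that Lemma~\ref{lem:contract} is nonexpansive, i.e. a contraction with constant exactly $1$ in the $L^1$ norm of the saturation, so the per-step errors merely add up rather than compound.
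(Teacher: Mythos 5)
Your proposal is correct and follows essentially the same route as the paper's own proof: the same auxiliary exact solution $\wt\btau^n$ of $\Ff_{n,\eps}(\cdot)=\0$, the same one-step estimate from \S\ref{sssec:onestep} applied to $\Ff_{n,\eps}$ (valid since its Jacobian coincides with that of $\Ff_n$), and the same use of Lemma~\ref{lem:contract} followed by the triangle inequality. Your telescoped recursion $E_n \le E_{n-1} + C_4\eps\dt^n$ is just the unrolled form of the paper's induction, so the two arguments are identical in substance.
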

\begin{proof}
We perform the proof by induction. Estimate~\eqref{eq:err-inexact}  clearly holds for $n=0$ since the initialization is exact. 
Now assume that it holds for $n-1$. Denote by $\wt \btau^n_\eps$ the exact solution of the system 
$\Ff_{n,\eps}(\wt \btau^n_\eps) = {\bf 0}$. Then it follows from the discussion carried out in~\S\ref{sssec:onestep} that 
$$
\|\pi_\Tt s(\btau^n_\eps) - \pi_\Tt s(\wt \btau^n_\eps) \|_{L^1(\O)} \le C_4 \eps \dt^n.
$$
On the other hand, applying Lemma~\ref{lem:contract}, one gets that 
$$
\|\pi_\Tt s(\wt \btau^n_\eps) - \pi_\Tt s(\btau^n) \|_{L^1(\O)} \le \|\pi_\Tt s(\btau^{n-1}_\eps) - \pi_\Tt s(\btau^{n-1}) \|_{L^1(\O)} \le C_4 \eps t^{n-1}.
$$
One concludes thanks to the triangle inequality.
\end{proof}

\section{Numerical validation of the approach}\label{sec:num}

In the previous section, we have shown that as long as the parametrization $\tau \mapsto (s(\tau), u(\tau))$ satisfies 
the condition \eqref{eq:param-nondeg}, the Newton's method applied to the problem \eqref{eq:syst-n2} exhibits local convergence. Moreover, due to Proposition~\ref{prop:err-inexact}, the error resulting from an inexact Newton's method can be efficiently controlled. 
Remark that the constant $C_4$ in \eqref{eq:err-inexact} depends on the nonlinearities $S$ and $\eta$ only 
through the quantities $\alpha_\star$ and $\alpha^\star$. 
Therefore, the estimate~\eqref{eq:err-inexact} is robust with respect to the hydrodynamic properties of the soil. We illustrate this fact by numerical experiments presented below. As an example of mobility/capillary pressure relations we consider a popular Brooks-Corey model~\cite{BC64} for which we compare the efficiency of Newton's method resulting from the parametrization $u(\tau) = \tau$ and the one satisfying \eqref{eq:param-nondeg} with $\alpha_\star = \alpha^\star = 1$.
\smallskip

Let $p_b< 0$ and $\beta > 0$, in Brooks-Corey model the saturation and the mobility functions are given by
$$
S(p) = \left\{ \begin{array}{lll} \displaystyle \left( \frac{p}{p_b} \right)^{-\beta} & \text{if}\;p < p_b,\\
1 & \text{if}\; p \geq p_b,
\end{array}
\right.
$$
and 
$$
\lambda(s) = s^{3+\frac{2}{\beta}},
$$
providing in terms of Kirchhoff transform
$$
\tilde{S}(u) = \left\{ \begin{array}{lll} \displaystyle \left( \frac{u}{u_b} \right)^{\frac{1}{\eta}}
&\text{if}\; u < \displaystyle u_b,\\
1 &\text{if}\; u \geq u_b,
\end{array}
\right.
$$
with $\displaystyle \eta = \beta + 3 + \frac{1}{\beta}$ and $\displaystyle u_b = -\frac{p_b }{\beta\eta}$.

Remark that the parametrization based on $u(\tau) = \tau$ and $s(\tau) = \tilde{S}(u(\tau))$, referred as $u-$formulation, do not satisfy \eqref{eq:param-nondeg} since the derivative of $\tilde{S}(u)$ is singular at $u=0$. As an alternative we consider the parametrization $\tau \mapsto (s(\tau), u(\tau))$  defined by the equation $\max( s'(\tau), u'(\tau) ) = 1$ and the condition $s(0) = 0$, to which we refer as $\tau-$formulation. 
We obtain the following explicit formulas
$$
s(\tau) = \left\{ \begin{array}{lll} 
\displaystyle \tau &\text{if}\; \tau < \tau_{\star},\\
\displaystyle S( \tau - \tau_\star + u_b \tau_\star^\eta ) &\text{if}\; \tau \geq \tau_{\star},
\end{array}
\right.
$$
and 
$$
u(\tau) = \left\{ \begin{array}{lll} 
\displaystyle u_b \tau^\eta &\text{if}\; \tau < \tau_{\star},\\
\displaystyle \tau - \tau_\star + u_b \tau_\star^\eta &\text{if}\; \tau \geq \tau_{\star},
\end{array}
\right.
$$
with $\displaystyle \tau_\star =  \min \left( \left( {\eta u_b} \right)^{\frac{1}{1-\eta} }, 1 \right)$.

\subsection{First test case}

We consider a bidimensional porous domain $\O = (0,1)\times (0,1)$, which is initially very dry with $s_0(\x) = 10^{-6}$ in $\O$. The water is injected at the pressure $p_D = 1$ through the portion of an upper boundary $\G_D = \{ (x_1, x_2) ~|~ x_1 \in (0, 0.3), x_2 = 1 \}$. The gravity vector is given by $\bold{g} = - \nabla x_2$ and a zero flux boundary condition is prescribed on $\partial \O \setminus \G_D$. The computations are performed on two quasi-uniform space discretizations of $\O$ composed of 396 and 1521 Vorono{\"i} cells referred as Mesh 1 and Mesh 2. The final time is set to $T = 0.7$ and the time step is equal to $0.01$. Figure \ref{fig_case_1_snap} shows, for $\beta = 4$ and $p_b = -10^{-2}$, the distribution of saturation and generalized pressure $u$ at different times. The results are visualized on triangular Delaunay mesh dual to Mesh 1.

\begin{figure}[htb]
    \centering
    \begin{subfigure}[b]{0.3\textwidth}
        \centering
        {\includegraphics[width=\textwidth]{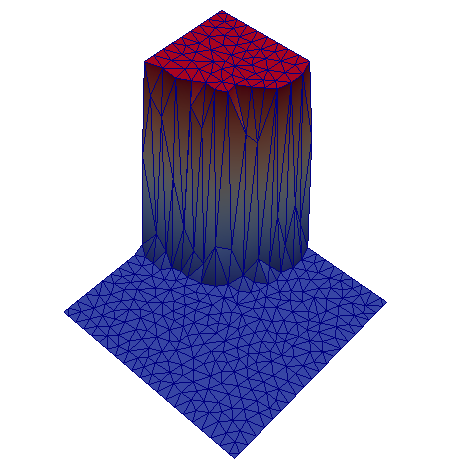}}
        \centering
        \caption{$s$ at $t = 0.1$}
        \label{fig_case_1_a}
    \end{subfigure}
    \hfill
    \begin{subfigure}[b]{0.3\textwidth}
        \centering
        {\includegraphics[width=\textwidth]{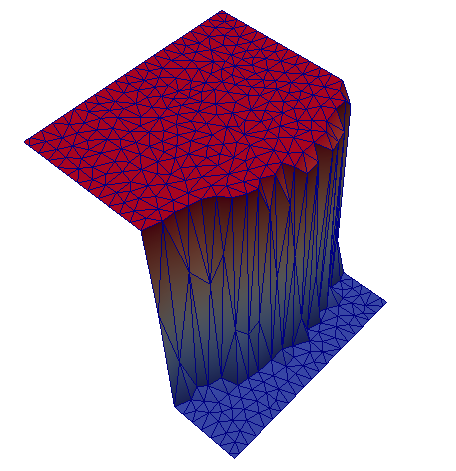}}  
        \centering
        \caption{$s$ at $t = 0.5$}
        \label{fig_case_1_b}
    \end{subfigure}
    \hfill
    \begin{subfigure}[b]{0.3\textwidth}
        \centering
        {\includegraphics[width=\textwidth]{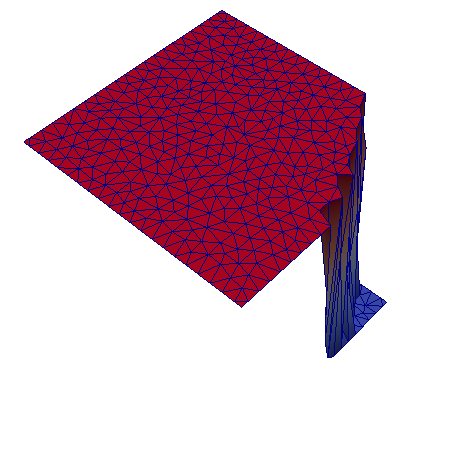}}  
        \centering
        \caption{$s$ at $t = 0.7$}
        \label{fig_case_1_c}
    \end{subfigure}    
    \hfill   
    \begin{subfigure}[b]{0.3\textwidth}
        \centering
        {\includegraphics[width=\textwidth]{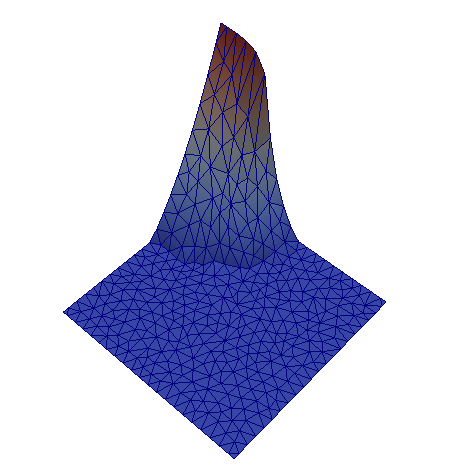}}
        \centering
        \caption{$u$ at $t = 0.1$}
        \label{fig_case_1_a}
    \end{subfigure}
    \hfill
    \begin{subfigure}[b]{0.3\textwidth}
        \centering
        {\includegraphics[width=\textwidth]{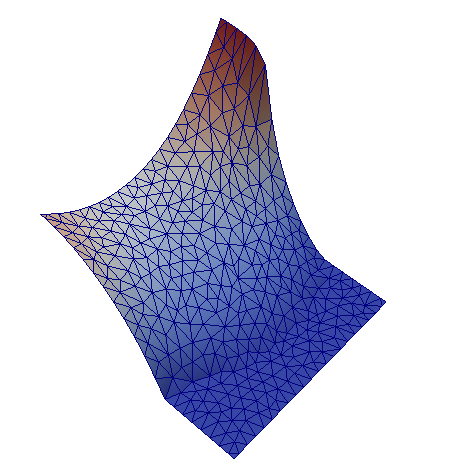}}  
        \centering
        \caption{$u$ at $t = 0.5$}
        \label{fig_case_1_b}
    \end{subfigure}
    \hfill
    \begin{subfigure}[b]{0.3\textwidth}
        \centering
        {\includegraphics[width=\textwidth]{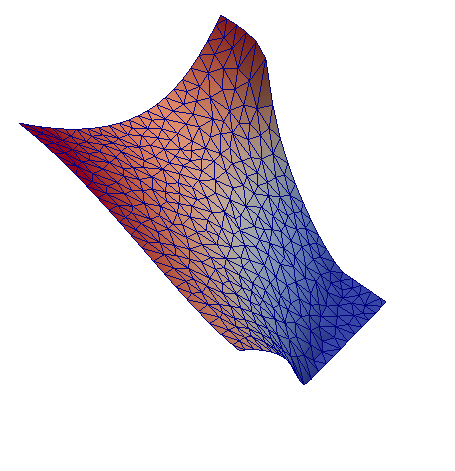}}  
        \centering
        \caption{$u$ at $t = 0.7$}
        \label{fig_case_1_c}
    \end{subfigure}    
    \hfill   
    \caption{Snapshots of the reference solution at different times.}
    \label{fig_case_1_snap}
\end{figure}

In order to challenge the robustness of both formulations, we set $p_b = -10^{-2}$ and we let the 
parameter $\beta$ take values in the set $\{1, 2, 4, 8, 16\}$. For each value of $\beta$ we compute, using $\tau-$formulation and tolerance $\eps_{ref} = 10^{-12}$, the reference solution denoted by $\left(\btau^n_{\beta} \right)_{n\in \{1,\ldots, N\}} \in \X_\Dd$. Then, for both formulations and for the values of $\eps \in \{10^{-2}, 10^{-4}, 10^{-6}, 10^{-8}, 10^{-10}, 10^{-12}\}$, we perform the calculations measuring the total number of Newton's iteration and the deviation, in the discrete $L^{\infty}(L^1)$ norm, of the ``observable'' variables $u$ and $s$ from the reference solution. 

For a given value of $\beta$ and of the tolerance $\eps$, we denote by $\left(\overline{\btau}^n_{\beta,\eps} \right)_{n\in \{1,\ldots, N\}} \in \X_\Dd$ and $\left(\btau^n_{\beta,\eps} \right)_{n\in \{1,\ldots, N\}} \in X_\Dd$ the approximate solution of \eqref{eq:syst-n2} obtained using the $u-$formulation and $\tau-$formulation respectively. The error produced by inexact Newton's method is measured by the quantities
$$
\overline{err}^{u}_{\beta,\eps} = \frac{ \| \pi_\Dd \overline{\btau}^n_{\beta,\eps}  - \pi_\Dd u( \btau^n_\beta )  \|_{L^{\infty}(0,T;L^1(\O))}} { \| \pi_\Dd u( \btau^n_\beta )  \|_{L^{\infty}(0,T;L^1(\O))} }
$$
and 
$$
\overline{err}^{s}_{\beta,\eps} = \frac{ \| \pi_\Dd \tilde{S} (  \overline{\btau}^n_{\beta,\eps} )  - \pi_\Dd s( \btau^n_\beta )  \|_{L^{\infty}(0,T;L^1(\O))}} { \| \pi_\Dd s( \btau^n_\beta )  \|_{L^{\infty}(0,T;L^1(\O))} }
$$
for $u-$formulation, and 
$$
err^{u}_{\beta,\eps} = \frac{ \| \pi_\Dd u( \btau^n_{\beta,\eps} ) - \pi_\Dd u( \btau^n_\beta )  \|_{L^{\infty}(0,T;L^1(\O))}} { \| \pi_\Dd u( \btau^n_\beta )  \|_{L^{\infty}(0,T;L^1(\O))} }
$$
and 
$$
err^{s}_{\beta,\eps} = \frac{ \| \pi_\Dd s( \btau^n_{\beta,\eps} ) - \pi_\Dd s( \btau^n_\beta )  \|_{L^{\infty}(0,T;L^1(\O))}} { \| \pi_\Dd s( \btau^n_\beta )  \|_{L^{\infty}(0,T;L^1(\O))} }
$$
for $\tau-$formulation.

\begin{figure}
    \centering
    \begin{subfigure}[b]{0.49\textwidth}
        \centering
        {\includegraphics[width=\textwidth]{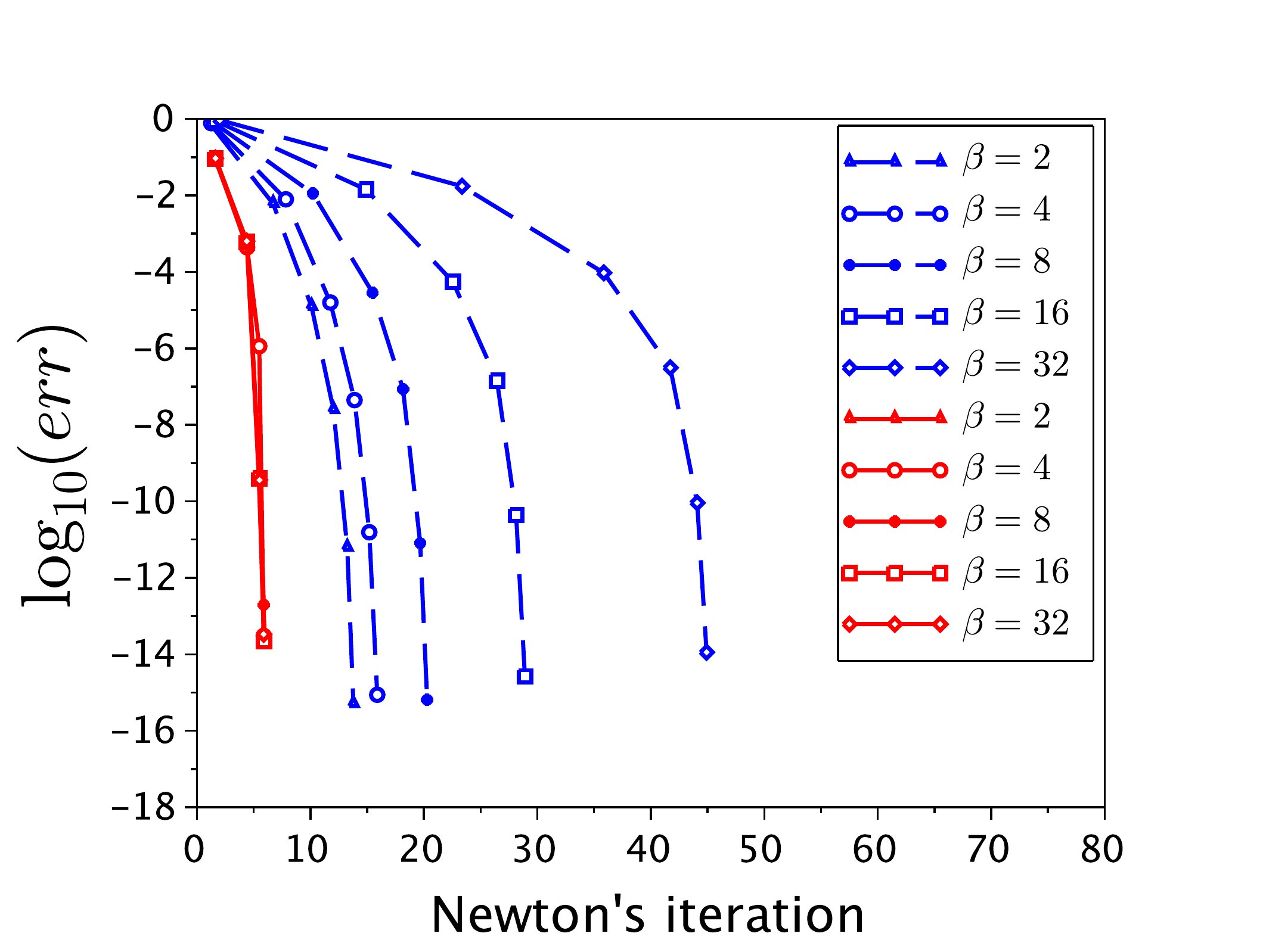}}
        \centering
        \caption{$err^{s}_{\beta,\eps}$ and $\overline{err}^{s}_{\beta,\eps}$ using Mesh 1.}
        \label{fig_case_1_a}
    \end{subfigure}
    \hfill
    \begin{subfigure}[b]{0.49\textwidth}
        \centering
        {\includegraphics[width=\textwidth]{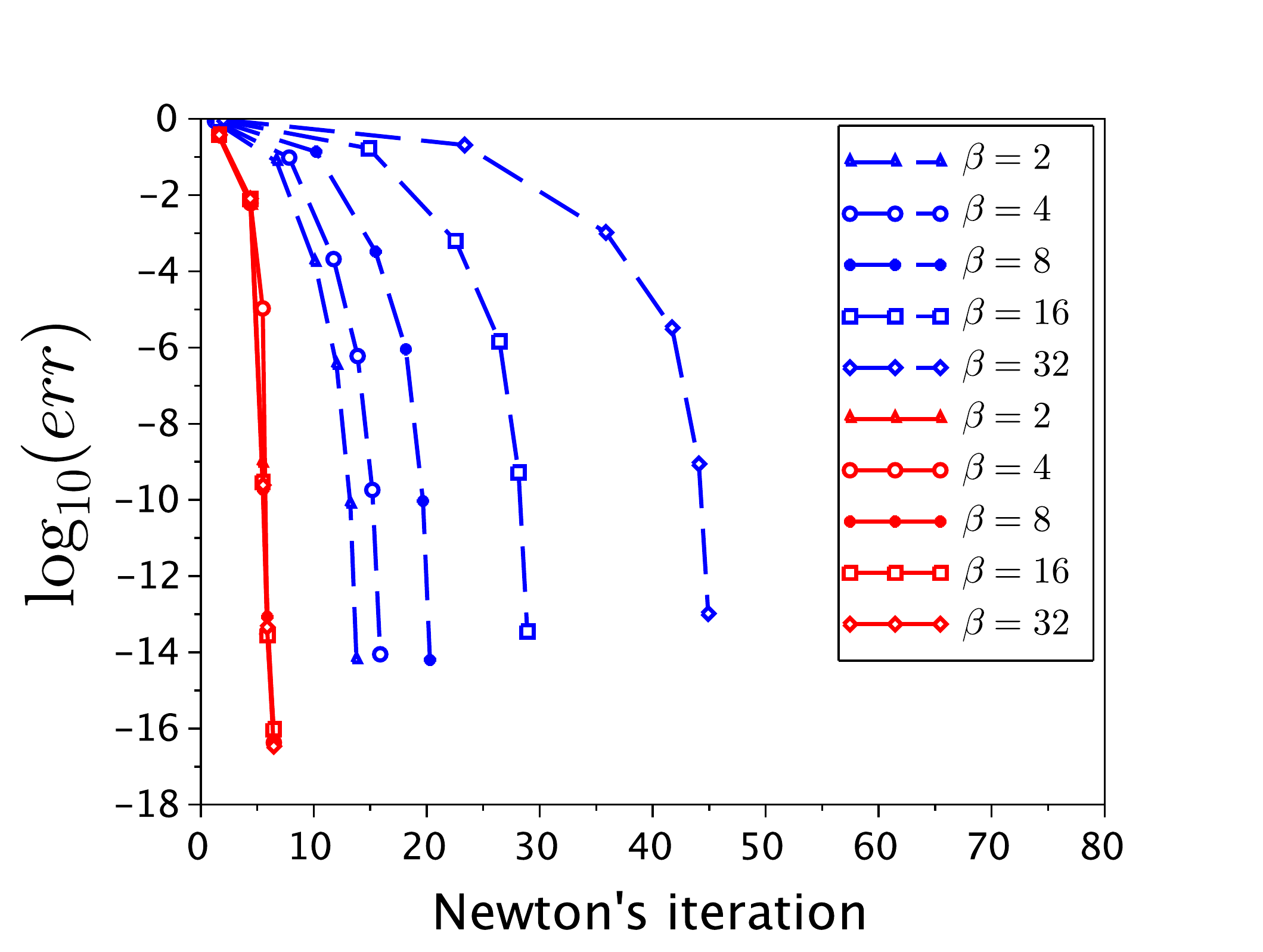}}  
        \centering
        \caption{$err^{u}_{\beta,\eps}$ and $\overline{err}^{u}_{\beta,\eps}$ using Mesh 1.}
        \label{fig_case_1_b}
    \end{subfigure}
    \hfill
    \begin{subfigure}[b]{0.49\textwidth}
        \centering
        {\includegraphics[width=\textwidth]{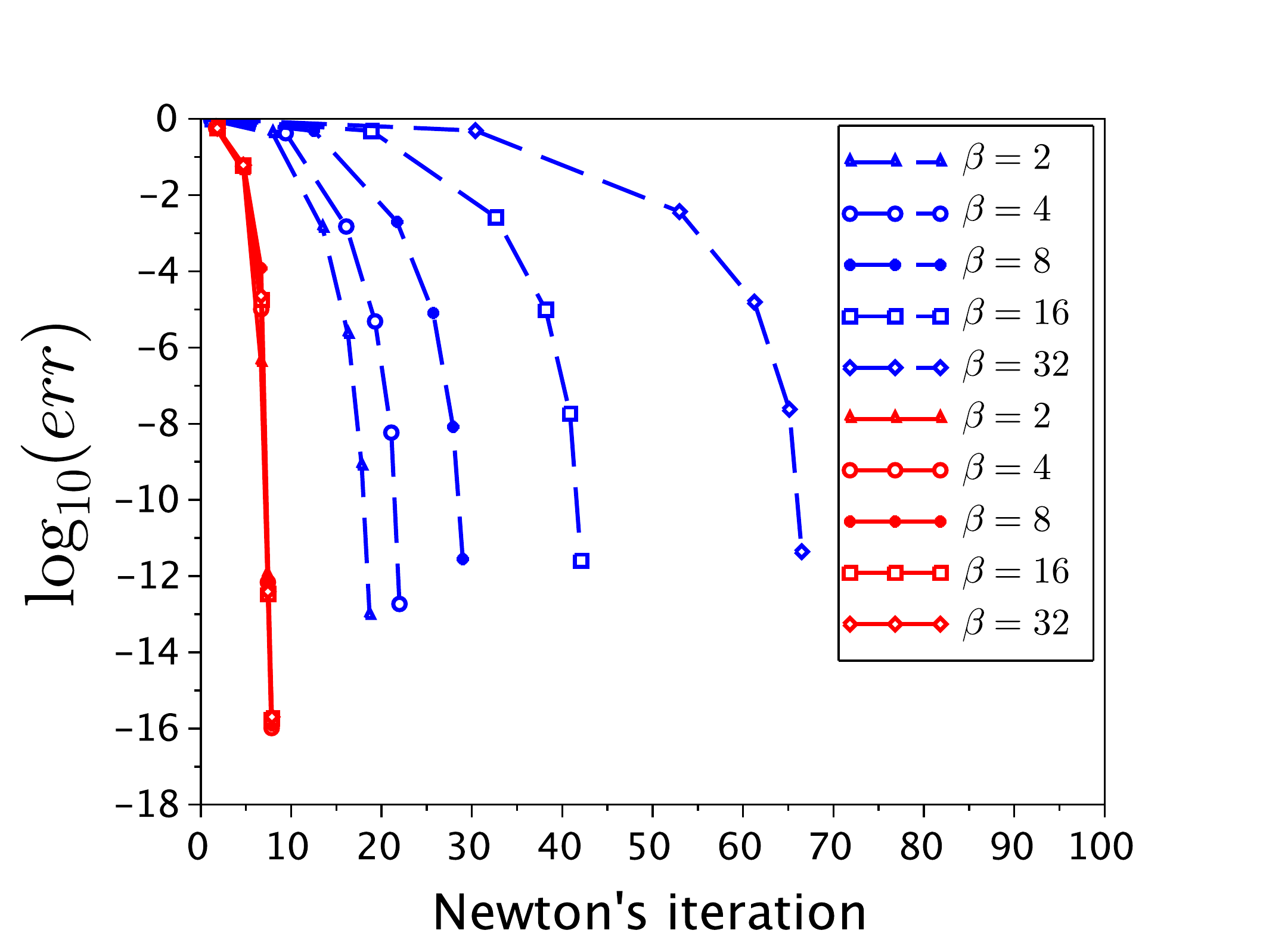}}  
        \centering
        \caption{$err^{s}_{\beta,\eps}$ and $\overline{err}^{s}_{\beta,\eps}$ using Mesh 2.}
        \label{fig_case_1_c}
    \end{subfigure}    
    \hfill
    \begin{subfigure}[b]{0.49\textwidth}
        \centering
        {\includegraphics[width=\textwidth]{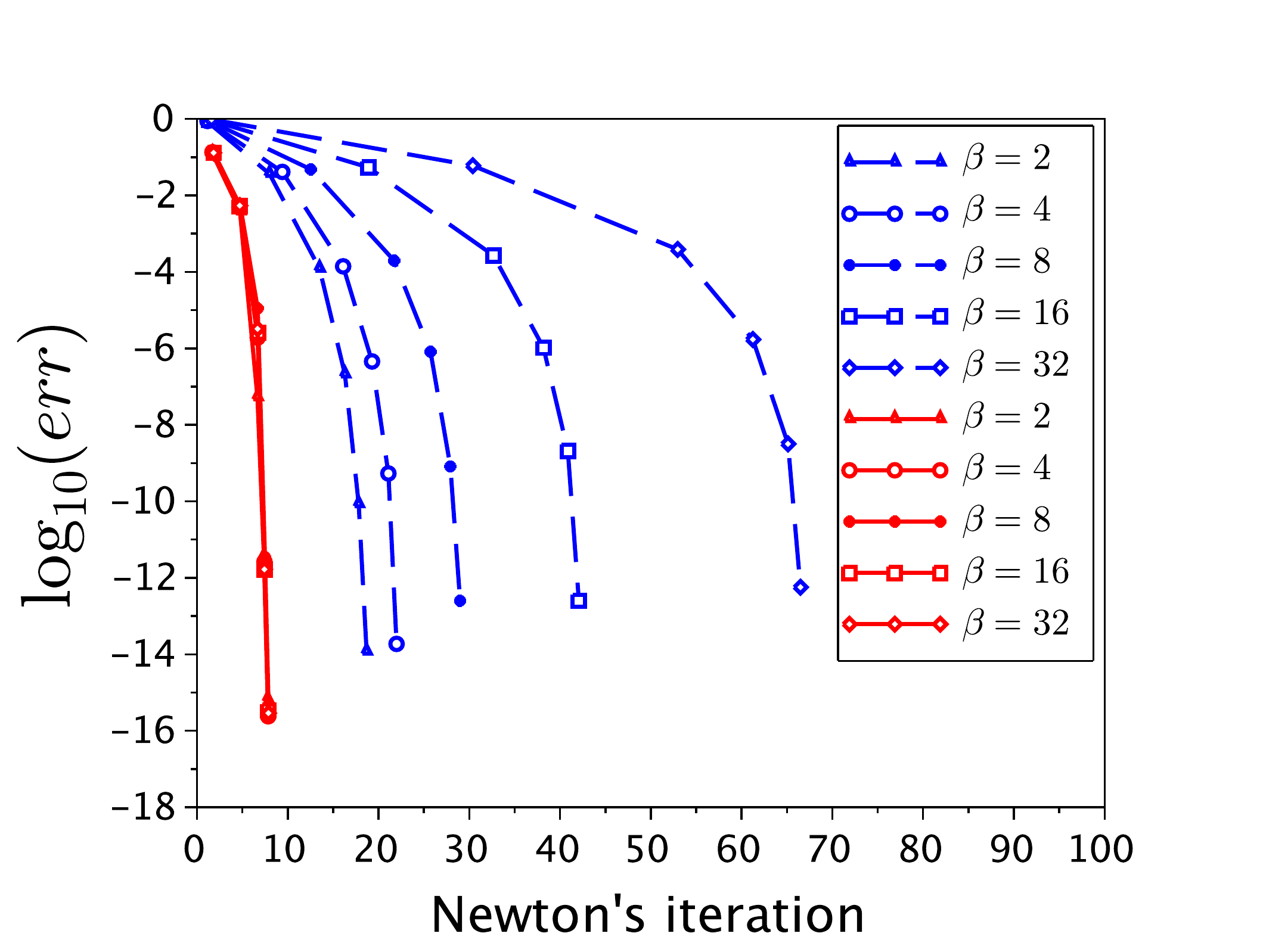}}  
        \centering
        \caption{$err^{u}_{\beta,\eps}$ and $\overline{err}^{u}_{\beta,\eps}$ using Mesh 2.}
        \label{fig_case_1_d}
    \end{subfigure}    
    \hfill
    \caption{Relative error as the function of the average number of Newton's iterations per time step using $\tau-$formulation (red solid lines) and $u-$formulation (blue dashed lines).}
    \label{fig_case_1}
\end{figure}
Figure \ref{fig_case_1} exhibits, for Meshes 1 and 2, the behavior of the relative error $err^\xi_{\beta,\eps}$ and $\overline{err}^\xi_{\beta,\eps}$, $\xi = u, s$ as the function of an average number of iterations per time step required by Newton's method in order to converge up to the given tolerance $\eps$. We observe that in order to achieve the same precision $u-$formulation require a much larger number of iterations then $\tau-$formulation. Moreover the number of Newton's iterations, for $u-$formulation, increases with $\beta$, whereas $\tau$-formulation remains robust with respect to this parameter. The contrast in the efficiency of two formulations is amplified as the mesh is refined.

\subsection{Second test case}

The goal of this test case is to give a numerical evidence that the inexact Newton's method applied to the 
$u$-formulation produces large errors due to troubles in the conservation of mass. 
To do so, we prescribe a zero-flux condition on the whole boundary of $\O = (0,1)\times (0,1)$, 
whereas the initial saturation satisfies 
$$
s_0 = \left\{\begin{array}{llll}
0.5 & \mbox{in} & \O', \\
10^{-6} & \mbox{in} & \O \setminus \O'
\end{array}\right. 
$$ 
with $\O' = \{ (x_1, x_2) ~|~ x_1 < 0.5 \;\text{and}\; x_2 > 0.5 \}$. We set $\beta = 4$ and $p_b = -10^{-2}$. The effects of gravity neglected so that the flow is only driven by diffusion. Figure \ref{fig_case_2_snap} exhibits, for different times, the saturation field associated with the reference solution, which is computed using $\tau-$formulation and the tolerance $\eps_{ref} = 10^{-16}$. Since the flow is very slow the large time steps are needed, we set $T = 10^5$ and $\Delta t = 10^3$. Computations are performed using Mesh 1.
\begin{figure}[h!]
    \centering
    \begin{subfigure}[b]{0.45\textwidth}
        \centering
        {\includegraphics[width=\textwidth]{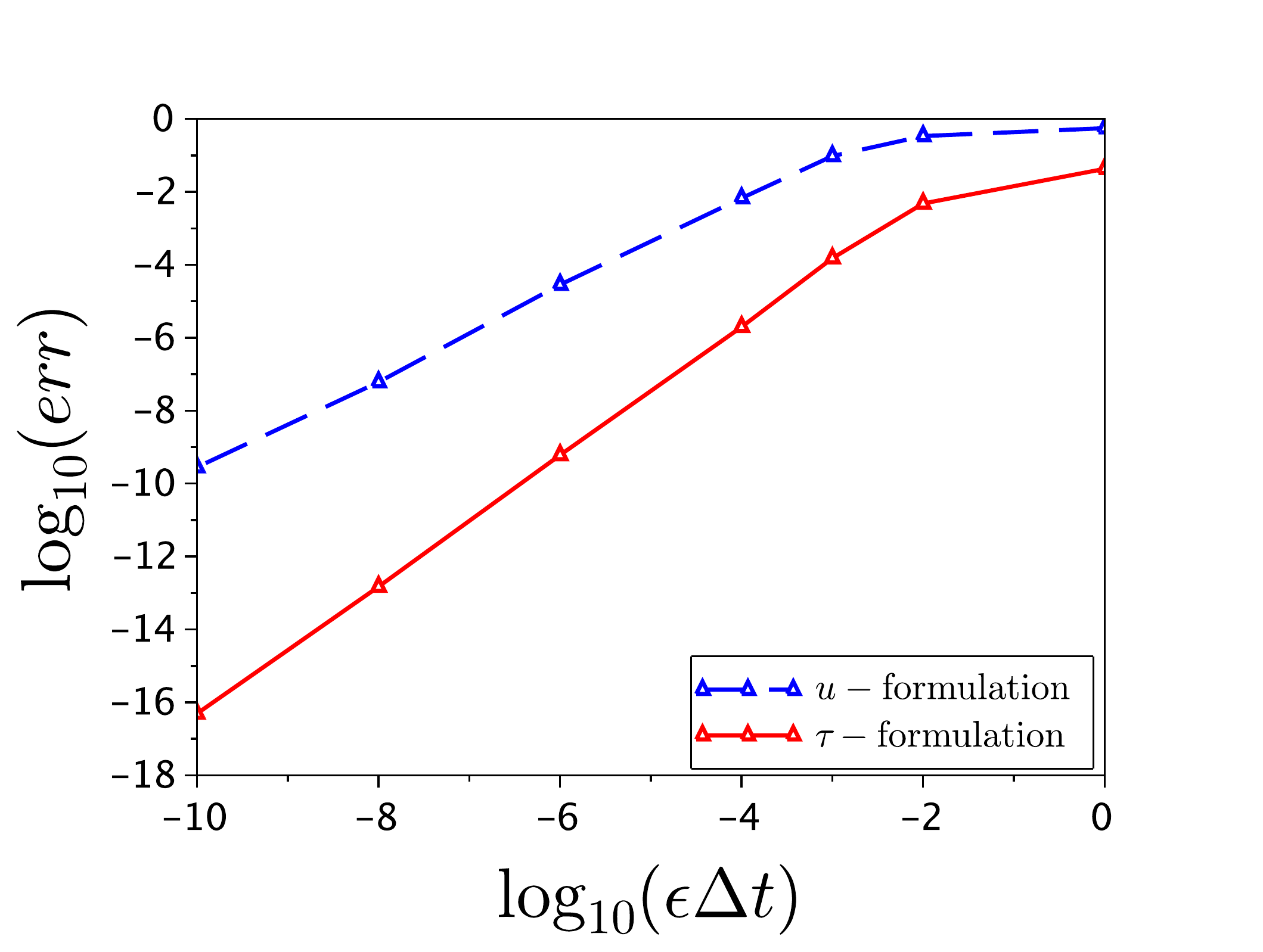}}  
        \centering
        \label{fig_case_2_errs_a}
    \end{subfigure}
    \hfill
    \begin{subfigure}[b]{0.45\textwidth}
        \centering
        {\includegraphics[width=\textwidth]{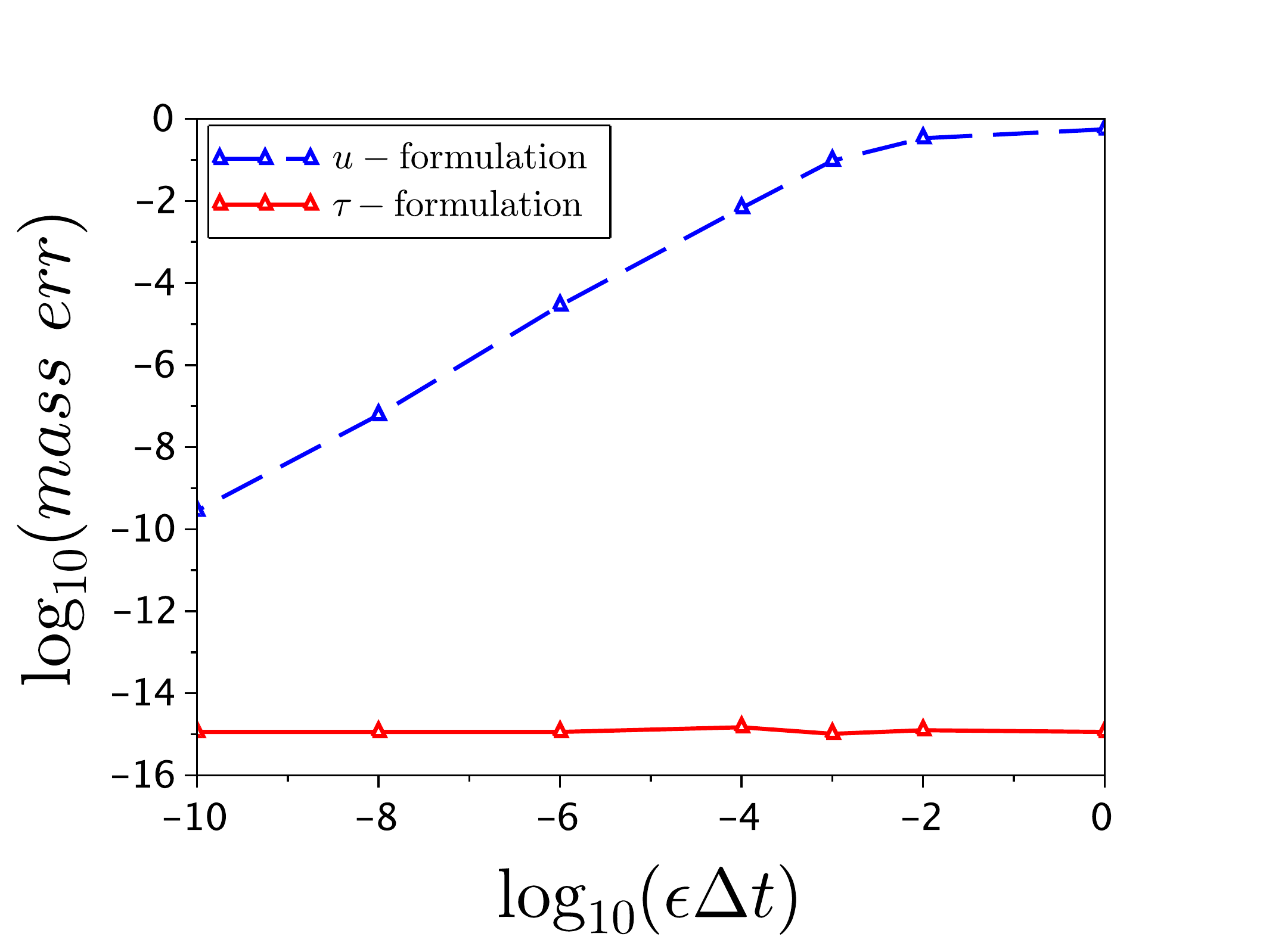}}  
        \centering
        \label{fig_case_2_errs_b}
    \end{subfigure}    
    \caption{At left $err^{s}_{\beta,\eps}$ (red solid lines) and $\overline{err}^{s}_{\beta,\eps}$ (blue dashed lines) as the function of $\eps$, at right the relative mass conservation error as the function of $\eps$.}
    \label{fig_case_2_errs}
\end{figure}
Let $M =\int_\O s_0 {\rm d}\x$, we define the relative mass conservation error by
$$
\overline{mass~err}^{s}_{\beta,\eps} = \frac{1}{M} \max_{n\in\{1,\ldots,N\}} \left| \int_\O \pi_\Dd \tilde{S} (  \overline{\btau}^n_{\beta,\eps} ) {\rm d}\x  - M \right| 
$$
and
$$
mass~err^{s}_{\beta,\eps} = \frac{1}{M} \max_{n\in\{1,\ldots,N\}}\left| \int_\O \pi_\Dd s (  \btau^n_{\beta,\eps} ) {\rm d}\x  - M \right|.
$$ 
Figure \ref{fig_case_2_errs} exhibits the $L^\infty(L^1)$ relative saturation error $err^{s}_{\beta,\eps}$, $\overline{err}^{s}_{\beta,\eps}$, and the relative mass conservation error $mass~err^{s}_{\beta,\eps}$ and $\overline{mass~err}^{s}_{\beta,\eps} $ as the functions of $\eps$. As one can see 
the $L^\infty(L^1)$ error produced by $u-$formulation is dominated by the mass conservation error. Remark that even for the rather small values $\eps=10^{-6}$ or $\eps=10^{-7}$, the error produced by $u-$formulation is still significant (see Figures \ref{fig_case_2_cut}).  
In contrast $\tau-$formulation leads to mush smaller errors and, for any value of $\eps$, conservatives the mass up to a precision of order $10^{-15}$. 

The very high accuracy for mass conservation observed with the $\tau-$formulation can be explained as follows.  With the values of the parameters $\beta, p_b$ we have chosen, one has $s(\tau) = \tau$ for $\tau \in [0,1]$ and hence for all $\tau \in \O \times (0,T)$ in view of initial and boundary conditions.
In addition, in view of \eqref{eq:FKsig} we have 
$$\sum_{K\in \Tt} \sum_{\sigma\in\Ee_K} F_{K,\sigma}(\btau^n) = 0, \qquad \forall \btau^n \in \bbX_\Dd.$$
Therefore, at each step of inexact Newton's method, the flux contribution globally offset, hence we have  
$$
\sum_{K\in\Tt} m_K \left( s(\tau^{n,k}_{\eps,K}) + s'(\tau^{n,k}_{\eps, K})( \tau^{n,k+1}_{\eps, K} - \tau^{n,k}_{\eps,K} ) - s(\tau^{n-1}_{\eps,K})  \right) = 0.
$$
Since $\tau \mapsto s(\tau)$ is linear, one has $s(\tau^{n,k+1}_{\eps,K}) = s(\tau^{n,k}_{\eps,K}) + s'(\tau^{n,k}_K)( \tau^{n,k+1}_{\eps, K} - \tau^{n,k}_{\eps,K} )$, which implies that the mass is exactly conserved (assuming that linear algebraic computations are exact) at each iteration of Newton's method.

\begin{figure}[h!]
    \centering
    \begin{subfigure}[b]{0.3\textwidth}
        \centering
        {\includegraphics[width=\textwidth]{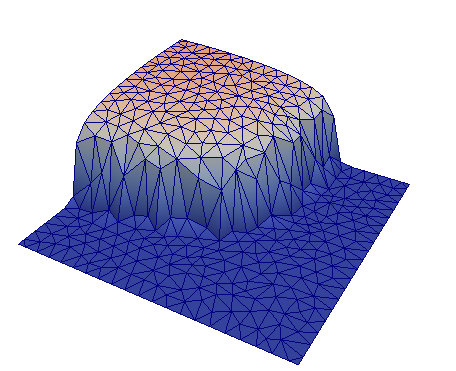}}
        \centering
        \caption{$t = 5~10^3$}
        \label{fig_case_1_a}
    \end{subfigure}
    \hfill
    \begin{subfigure}[b]{0.3\textwidth}
        \centering
        {\includegraphics[width=\textwidth]{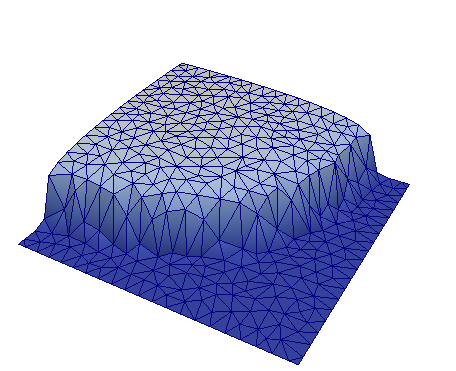}}  
        \centering
        \caption{$t = 50~10^3$}
        \label{fig_case_1_b}
    \end{subfigure}
    \hfill
    \begin{subfigure}[b]{0.3\textwidth}
        \centering
        {\includegraphics[width=\textwidth]{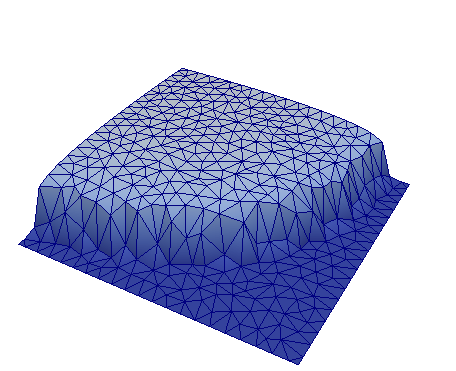}}  
        \centering
        \caption{$t = 100~10^3$}
        \label{fig_case_1_c}
    \end{subfigure}    
    \hfill   
    \caption{Saturation field of the reference solution at different times.}
    \label{fig_case_2_snap}
\end{figure}
\begin{figure}[h!]
    \centering
    \begin{subfigure}[b]{0.3\textwidth}
        \centering
        {\includegraphics[width=\textwidth]{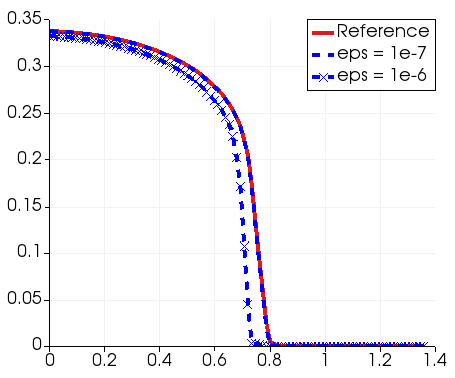}}
        \centering
        \caption{$t = 5~10^3$}
        \label{fig_case_1_a}
    \end{subfigure}
    \hfill
    \begin{subfigure}[b]{0.3\textwidth}
        \centering
        {\includegraphics[width=\textwidth]{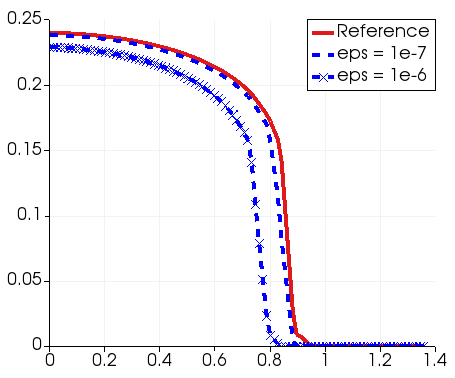}}  
        \centering
        \caption{$t = 50~10^3$}
        \label{fig_case_1_b}
    \end{subfigure}
    \hfill
    \begin{subfigure}[b]{0.3\textwidth}
        \centering
        {\includegraphics[width=\textwidth]{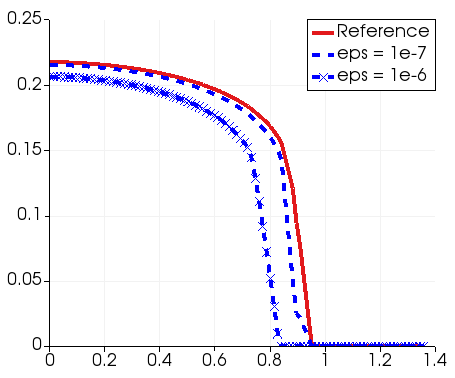}}  
        \centering
        \caption{$t = 100~10^3$}
        \label{fig_case_1_c}
    \end{subfigure}    
    \hfill   
    \caption{Saturation profile of the reference solution along the line $y = 1 - x$ compared to the saturation profiles of the approximate solutions computed using $u$-formulation and $\eps = 10^{-8}$ and $10^{-9}$.}
    \label{fig_case_2_cut}
\end{figure}

\def\ocirc#1{\ifmmode\setbox0=\hbox{$#1$}\dimen0=\ht0 \advance\dimen0
  by1pt\rlap{\hbox to\wd0{\hss\raise\dimen0
  \hbox{\hskip.2em$\scriptscriptstyle\circ$}\hss}}#1\else {\accent"17 #1}\fi}

\bigskip
\bigskip

\small

\noindent Konstantin \textsc{Brenner} \\
Laboratoire Jean-Alexandre Dieudonn\'e, Universit\'e de Nice Sophia Antipolis, \\
Team Coffee INRIA Sophia Antipolis M\'editerran\'ee, \\
06108 Nice Cedex 02, France.\\
\href{konstantin.brenner@unice.fr}{\tt konstantin.brenner@unice.fr}

\bigskip

\noindent Cl{\'e}ment \textsc{Canc{\`e}s} \\
Team Rapsodi INRIA Lille - Nord Europe, \\
40, avenue Halley, 59650 Villeneuve d'Ascq, France.\\
\href{clement.cances@inria.fr}{\tt clement.cances@inria.fr}

\end{document}